\newtheorem{theorem}{Theorem}
\newtheorem{corollary}[theorem]{Corollary}
\newtheorem{lemma}[theorem]{Lemma}
\newtheorem{proposition}[theorem]{Proposition}
\newtheorem*{theorem*}{Theorem}
\theoremstyle{definition}
\newtheorem{definition}[theorem]{Definition}
\theoremstyle{remark}
\newtheorem{remark}[theorem]{Remark}
\newcommand{\dbar}{\bar \partial}
\newcommand{\eps}{\varepsilon}
\newcommand{\stack}[2]{\genfrac{}{}{0pt}{}{#1}{#2}} 
\newcommand{\tensor}{\otimes}
\newcommand{\CC}{\mathbb{C}}
\newcommand{\RR}{\mathbb{R}}
\newcommand{\C}{\mathcal{C}}
\newcommand{\E}{\mathcal{E}}
\newcommand{\F}{\mathcal{F}}
\renewcommand{\O}{\mathcal{O}}
\renewcommand{\P}{\mathcal{P}}
\DeclareMathOperator{\End}{End}
\DeclareMathOperator{\id}{id}
\DeclareMathOperator{\rank}{rank}
\DeclareMathOperator{\Ric}{Ric}
\DeclareMathOperator{\tr}{tr}
\DeclareMathOperator{\vol}{vol}
\renewcommand{\Re}{\operatorname{Re}}
\renewcommand{\leq}{\leqslant}
\renewcommand{\geq}{\geqslant}
\title{Stability and Hermitian-Einstein metrics for vector bundles on framed manifolds}
\author{Matthias Stemmler}
\begin{document}

\maketitle

\begin{abstract}
We adapt the notions of stability of holomorphic vector bundles in the sense of Mumford-Takemoto and Hermitian-Einstein metrics in holomorphic vector bundles for canonically polarized framed manifolds, i.~e.\ compact complex manifolds $X$ together with a smooth divisor $D$ such that $K_X \tensor [D]$ is ample. It turns out that the degree of a torsion-free coherent sheaf on $X$ with respect to the polarization $K_X \tensor [D]$ coincides with the degree with respect to the complete K\"ahler-Einstein metric $g_{X \setminus D}$ on $X \setminus D$. For stable holomorphic vector bundles, we prove the existence of a Hermitian-Einstein metric with respect to $g_{X \setminus D}$ and also the uniqueness in an adapted sense.
\end{abstract}

\noindent Mathematics Subject Classification 2000: 53C07, 32L05, 32Q20

%
%
\section{Introduction}

The Kobayashi-Hitchin correspondence states that an irreducible holomorphic vector bundle on a compact K\"ahler manifold admits a Hermitian-Einstein metric if and only if it is stable in the sense of Mumford-Takemoto. A {\em logarithmic pair\/} (or {\em framed manifold}) is defined to be a pair $(X, D)$ consisting of a compact complex manifold $X$ together with a smooth (irreducible) divisor $D$ in $X$. It is called {\em canonically polarized\/} if the line bundle $K_X \tensor [D]$ is ample, where $K_X$ denotes the canonical line bundle of $X$ and $[D]$ is the line bundle associated to the divisor~$D$. In this case, corresponding to Yau's solution of the Calabi conjecture \cite{Yau78}, R.~Kobayashi \cite{Ko84} proves the existence and uniqueness (up to a constant multiple) of a complete K\"ahler-Einstein metric $g_{X'}$ with negative Ricci curvature on the complement $X' := X \setminus D$ of $D$ in $X$. This metric is of Poincar\'e-type growth near the divisor $D$. The manifold $X'$ has bounded geometry in the sense of Cheng and Yau \cite{CY80} with respect to quasi-coordinates that were introduced by R.~Kobayashi in \cite{Ko84} and studied by Tian and Yau in \cite{TY87}. Moreover, the asymptotic behaviour of the metric $g_{X'}$ near the divisor $D$ is known due to results by Schumacher \cite{Sch98}, \cite{Sch02}. The goal of the present article is to use these results to establish suitable notions of stability and Hermitian-Einstein metrics for vector bundles on framed manifolds in order to obtain a Kobayashi-Hitchin correspondence in this context.


One approach involves {\em parabolic structures\/} in a holomorphic vector bundle on~$X$ with respect to the divisor $D$ as introduced by Mehta and Seshadri \cite{MS80} on Riemann surfaces and generalized to higher-dimensional varieties by Maruyama and Yokogawa \cite{MY92} (see also Biswas \cite{Bs95}, \cite{Bs97a}, \cite{Bs97b}). Li and Narasimhan \cite{LN99} studied parabolic structures for rank-$2$ vector bundles on framed manifolds of complex dimension $2$ with respect to a K\"ahler metric on~$X'$ which is the restriction of a K\"ahler metric on~$X$. Moreover, Biquard \cite{Bi97} studied these structures with respect to the complete K\"ahler-Einstein metric on $X'$.

We follow a different approach by directly adapting the ordinary notions of stability and Hermitian-Einstein metrics to the framed situation, making use of the results on the asymptotic behaviour of the Poincar\'e-type metric. Given a canonically polarized framed manifold $(X, D)$, there are two obvious notions of ``stability in the framed sense'' of a torsion-free coherent analytic sheaf~$\E$ on the framed manifold $(X, D)$. On the one hand, there is the standard notion of stability of $\E$ with respect to the polarization $K_X \tensor [D]$ of~$X$. On the other hand, we can compute the degree of coherent subsheaves $\F$ of $\E$ in terms of the complete K\"ahler-Einstein metric on $X'$. For this notion of stability it is sufficient to consider subsheaves which are defined on $X$ rather than $X'$. Using a theorem from \cite{Sch98} on the asymptotic behaviour of the Poincar\'e-type metric, we show that these two approaches are equivalent. This is strong evidence that the resulting notion of {\em stability in the framed sense\/} or {\em framed stability\/} is suitable for a Kobayashi-Hitchin correspondence. In fact, as in the classical situation, the framed stability of a holomorphic vector bundle on~$X$ implies its simplicity. However, it seems not to imply the simplicity of its restriction to $X'$.

The precise definition of a {\em Hermitian-Einstein metric in the framed sense\/} or {\em framed Hermitian-Einstein metric\/} in a holomorphic vector bundle $E$ on~$X$ requires special attention. We are interested in Hermitian metrics in $E$ over $X'$ satisfying the Hermitian-Einstein condition with respect to the complete K\"ahler-Einstein metric. Like in Simpson's work \cite{Si88} and in the parabolic case, we also need a compatibility condition for Hermitian-Einstein metrics in order to exclude unwanted connected components in the space of metrics.

Having established these notions, we prove the following version of the Kobayashi-Hitchin correspondence: In every holomorphic vector bundle $E$ on a canonically polarized framed manifold which is stable in the framed sense, there exists a unique (up to a constant multiple) framed Hermitian-Einstein metric. Thanks to the concept of bounded geometry, Simpson's heat equation method from \cite{Si88} can be applied to our situation by expressing all analytic considerations in terms of quasi-coordinates. Simpson solves an evolution equation of the heat conduction type for all non-negative values of a time parameter $t$. If the solution converges as $t$ goes to infinity, the limit yields the desired Hermitian-Einstein metric. For the case of divergence he constructs a subsheaf contradicting the stability condition. This is done by first constructing a {\em weakly holomorphic subbundle\/} of $E$ given by a measurable section $\pi$ of $\End(E)$ in the Sobolev space of $L^2$ sections with $L^2$ first-order weak derivatives. In our situation, the section $\pi$ satisfies the $L^2$ condition with respect to the Poincar\'e-type metric. The estimates from \cite{Sch02} imply the $L^2$ condition in the usual sense, so that the theorem of Uhlenbeck-Yau \cite{UY86} can be applied, which provides a destabilizing subsheaf of $\E$ on all of $X$.

We also want to point out the following result by Ni and Ren (\cite{NR01}, see also \cite{Ni02}): In the situation of a complete K\"ahler manifold with a positive lower bound on the spectrum of the Laplace-Beltrami operator, they show that every Hermitian metric which is ``asymptotically Hermitian-Einstein'' (i.~e.\ with its contracted curvature tensor satisfying a certain integrability condition) can be deformed into a Hermitian-Einstein metric. It turns out, however, that manifolds with metrics of Poincar\'e-type growth always have finite volume (see \cite{Ko84}, proof of Lemma 1, p.\ 402) and thus clearly violate the above spectral bound condition. In particular, the theorem of Ni and Ren does not cover the situation considered here. Other results in this direction include the works of Simpson \cite{Si88} and Bando~\cite{Ba93}.


The results in this paper are from the author's dissertation \cite{St09}.

%
%
\section{Preliminaries}

Let $(X,g)$ be a compact K\"ahler manifold of complex dimension $n$. The notion of $g$-stability in the sense of Mumford-Takemoto \cite{Ta72} can be formulated as follows. Define the {\em g-degree\/} of a torsion-free coherent analytic sheaf~$\E$ on~$X$ as
\[
  \deg_g(\E) = \int_X c_1(\E) \wedge \omega^{n-1},
\]
where $c_1(\E)$ is the first Chern class of $\E$ and $\omega$ is the fundamental form of $g$. If $\E = \O_X(E)$ is the sheaf of holomorphic sections of a holomorphic vector bundle $E$ on~$X$, the $g$-degree can be written as
\[
  \deg_g(E) = \int_X \frac{\sqrt{-1}}{2 \pi} \tr(F_h) \wedge \omega^{n-1},
\]
where $F_h$ is the curvature form of the Chern connection in $E$ with respect to a smooth Hermitian metric~$h$. If the sheaf~$\E$ is non-trivial, the ratio
\[
  \mu_g(\E) = \frac{\deg_g(\E)}{\rank(\E)}
\]
of the $g$-degree of $\E$ and its rank is called the {\em $g$-slope\/} of $\E$. A torsion-free coherent analytic sheaf~$\E$ on~$X$ is then called {\em $g$-stable\/} if
\[
  \mu_g(\F) < \mu_g(\E)
\]
holds for every coherent subsheaf~$\F$ of $\E$ with $0 < \rank(\F) < \rank(\E)$. A holomorphic vector bundle $E$ on~$X$ is called {\em $g$-stable\/} if its sheaf~$\E = \O_X(E)$ of holomorphic sections is $g$-stable. In the projective-algebraic case, given an ample line bundle $H$ on~$X$, the degree of $\E$, which is then called the {\em $H$-degree}, is an integer and can be written as
\[
  \deg_H(\E) = (c_1(\E) \cup c_1(H)^{n-1}) \cap [X].
\]
Correspondingly, in this case, the notions of slope and stability of a torsion-free coherent analytic sheaf~$\E$ or a holomorphic vector bundle $E$ on~$X$ are referred to as the the {\em $H$-slope\/} and the {\em $H$-stability}, respectively. According to S.~Kobayashi \cite{Kb80}, a Hermitian metric~$h$ in a holomorphic vector bundle $E$ on~$X$ is called a {\em $g$-Hermitian-Einstein metric\/} if it satisfies the Einstein condition
\[
  \sqrt{-1} \Lambda_g F_h = \lambda_h \id_E
\]
with a real constant $\lambda_h$, where $\sqrt{-1} \Lambda_g$ is the contraction with $\omega$, $F_h$ is the curvature form of the Chern connection of $(E,h)$ and $\id_E$ is the identity endomorphism of $E$. The relationship of the notions of $g$-stability and $g$-Hermitian-Einstein metrics is classical: An irreducible holomorphic vector bundle on~$X$ admits a $g$-Hermitian-Einstein metric if and only if it is $g$-stable, cf.\ Narasimhan/Seshadri \cite{NS65}, S.~Kobayashi \cite{Kb82}, L\"ubke \cite{Lue83}, Donaldson \cite{Do83}, \cite{Do85}, \cite{Do87} and Uhlenbeck/Yau \cite{UY86}, \cite{UY89}.

The definition of framed manifolds given in the introduction is analogous to {\em framed vector bundles\/} and related notions, cf.\ Lehn \cite{Le93}, L\"ubke \cite{Lue93}, L\"ubke/Okonek/Schumacher \cite{LOS93}, Garc\'ia-Prada \cite{GP94}, Bradlow/Garc\'ia-Prada \cite{BG96}, Schmitt \cite{Sc04} and Bruasse/Teleman \cite{BT05}. In the canonically polarized case, the complete K\"ahler-Einstein metric on $X'$ mentioned above will be denoted by~$g_{X'}$ in what follows. It is of {\em Poincar\'e-type growth\/} near the divisor~$D$, i.~e.\ for every point $p \in D$, there is a coordinate neighbourhood $U(p) \subset X$ of $p$ with $U(p) \cap X' \cong \Delta^\ast \times \Delta^{n-1}$ such that in these coordinates, it is quasi-isometric to a product of the Poincar\'e metric on the punctured unit disc~$\Delta^\ast$ and $n-1$ copies of the Euclidean metric on the full unit disc~$\Delta$. As mentioned in the introduction, $X'$ has finite volume with respect to this metric.

Given a local coordinate system $(\Delta^\ast \times \Delta^{n-1}; z^1, \ldots, z^n)$ on an open neighbourhood $U(p) \subset X$ of a point $p \in D$ as above with respect to which the divisor~$D$ is given by the equation $z^1 = 0$, the quasi-coordinates mentioned above are defined as follows. Choose a fixed real number~$R$ with $\frac{1}{2} < R < 1$ and write $B_R(0) = \{ v \in \CC : |v| < R \}$. Then there are quasi-coordinate systems $(B_R(0) \times \Delta^{n-1}; v^1, \ldots, v^n)$ with
\[
  v^1 = \frac{w^1 - a}{1 - a w^1}, \quad \text{where }
  z^1 = \exp\left(\frac{w^1 + 1}{w^1 - 1}\right),
\]
and
\[
  v^i = w^i = z^i \quad \text{for } 2 \leq i \leq n,
\]
where $a$ varies over all real numbers in $\Delta$ close to $1$. There is a representation of $dv^1$ in terms of $dz^1$ which does not directly involve the numbers $a$. In fact, we have
\begin{equation} \label{e:quasi-differentials}
  dv^1 = \frac{(|v^1|^2 - 1) (v^1 - 1)}{\bar v^1 - 1} \, \frac{dz^1}{z^1 \log(1/|z^1|^2)}.
\end{equation}
When doing analysis with respect to the complete K\"ahler-Einstein metric, the function spaces defined in terms of these quasi-coordinates provide the most convenient choice in order to carry over the arguments from the compact (un-framed) case.

The asymptotic behaviour of this metric is known --- in fact, we have the following explicit description of its volume form.

\begin{theorem*}[Schumacher, \cite{Sch98}, Theorem 2]
There is a number $0 < \alpha \leq 1$ such that for all $k \in \{ 0, 1, \ldots \}$ and $\beta \in (0, 1)$, the volume form of $g_{X'}$ is of the form
\[
  \frac{2 \Omega}{||\sigma||^2 \log^2 (1/||\sigma||^2)}
  \left(1 + \frac{\nu}{\log^\alpha (1/||\sigma||^2)}\right)
  \quad \text{with } \nu \in \C^{k,\beta}(X'),
\]
where $\Omega$ is a smooth volume form on~$X$, $\sigma$ is a canonical section of~$[D]$, $||{\cdot}||$ is the norm induced by a Hermitian metric in $[D]$ and $\C^{k,\beta}(X')$ is the H\"older space of $\C^{k,\beta}$ functions on~$X'$ with respect to the quasi-coordinates.
\end{theorem*}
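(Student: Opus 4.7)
The plan is to derive the claimed form of the volume form by treating the K\"ahler-Einstein equation as a complex Monge-Amp\`ere equation on $X'$, starting from the explicit ``model'' volume form
\[
V_0 = \frac{2\Omega}{||\sigma||^2 \log^2(1/||\sigma||^2)}.
\]
This $V_0$ is quasi-isometric to the Poincar\'e volume element near $D$, and its Ricci form $-\sqrt{-1}\,\partial\dbar\log V_0$ represents the class $c_1(K_X \tensor [D])$ and is itself of Poincar\'e-type growth. Since $g_{X'}$ is K\"ahler-Einstein with negative Einstein constant, its volume form $V$ satisfies $\sqrt{-1}\,\partial\dbar\log V = \omega_{X'}$ up to a universal normalization, so the task reduces to identifying and controlling the multiplicative correction $V/V_0$.

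Concretely, I would fix a smooth reference K\"ahler form $\omega_0$ on $X'$, cohomologous to $c_1(K_X \tensor [D])$ and asymptotic to the Poincar\'e-type metric, with $\omega_0^n/n! = V_0 \cdot e^{-F}$ for some function $F$ smooth on $X'$. Writing $\omega_{X'} = \omega_0 + \sqrt{-1}\,\partial\dbar u$, the Einstein condition takes the Monge-Amp\`ere form
\[
(\omega_0 + \sqrt{-1}\,\partial\dbar u)^n = e^{F+u}\,\omega_0^n.
\]
A direct local computation in Poincar\'e coordinates near $D$ shows that $F$ decays at a rate of the form $1/\log^\alpha(1/||\sigma||^2)$ for some $0 < \alpha \leq 1$, depending on how tightly $\omega_0$ is matched to $g_{X'}$ at subleading order; this is the origin of the exponent~$\alpha$ in the theorem. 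I would then solve the Monge-Amp\`ere equation along the lines of R.~Kobayashi \cite{Ko84} and Yau, running a continuity method in the H\"older spaces $\C^{k,\beta}(X')$ defined via the quasi-coordinates. Because $(X',g_{X'})$ has bounded geometry in these coordinates, the interior Schauder estimates apply uniformly from chart to chart.

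The final step is to propagate the decay of $F$ to the solution $u$. The maximum principle applied to the Monge-Amp\`ere equation yields $|u| \leq C/\log^\alpha(1/||\sigma||^2)$ with the same exponent, and the uniform Schauder bounds upgrade this pointwise estimate to $\log^\alpha(1/||\sigma||^2)\cdot u \in \C^{k,\beta}(X')$. Writing $V/V_0 = e^{G(u,F)}$ for the smooth combination forced by the substitutions above and setting $\nu = \log^\alpha(1/||\sigma||^2)\,(V/V_0 - 1)$ then yields an element of $\C^{k,\beta}(X')$ of the stated form. The hard part is precisely this last passage: securing any $\alpha > 0$ requires constructing $\omega_0$ finely enough that $F$ actually decays (not merely remains bounded), and recovering the H\"older regularity of $\nu$ forces one to work throughout in the quasi-coordinate function spaces rather than in the ordinary H\"older spaces on $X'$, where the metric degenerates toward~$D$.
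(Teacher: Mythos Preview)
The paper does not prove this statement: it is quoted verbatim as Theorem~2 of Schumacher \cite{Sch98} and used as a black box throughout. There is therefore no proof in the paper to compare your proposal against.

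That said, your outline is a faithful sketch of the actual argument in \cite{Sch98} (building on \cite{Ko84}, \cite{CY80}, \cite{TY87}): one chooses a model potential whose associated volume form is $V_0$, reduces the K\"ahler-Einstein condition to a complex Monge-Amp\`ere equation for the correction $u$ in the quasi-coordinate H\"older spaces, and then extracts the decay rate $\log^{-\alpha}(1/||\sigma||^2)$ from the decay of the inhomogeneity $F$ via the maximum principle and bounded-geometry Schauder estimates. One point you should be more careful about: the exponent $\alpha$ is not freely adjustable by ``matching $\omega_0$ more tightly,'' but is dictated by a linearized analysis---specifically by the spectrum of the ordinary differential operator that arises when one separates variables in the Laplacian of the model metric near $D$. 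Schumacher obtains $\alpha$ from an isomorphism theorem for weighted H\"older spaces, and the admissible range of weights is constrained by this spectral data; your sketch hides this inside the phrase ``depending on how tightly $\omega_0$ is matched,'' which is not quite the mechanism. Apart from that, your plan is the right one, but it belongs to \cite{Sch98} rather than to the paper under review.
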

Moreover, in \cite{Sch98}, it is shown that the fundamental form of $g_{X'}$ converges to the fundamental form of a K\"ahler-Einstein metric on $D$ locally uniformly when restricted to coordinate directions parallel to $D$. From this, one obtains the following result on the asymptotics of the Poincar\'e-type metric. Let $\sigma$ be a canonical section of~$[D]$, which can be regarded as a local coordinate in a neighbourhood of a point $p \in D$. Then we can choose local coordinates $(\sigma, z^2, \ldots, z^n)$ near $p$ such that if $g_{\sigma \bar \sigma}$, $g_{\sigma \bar \jmath}$ etc.\ denote the coefficients of the fundamental form of $g_{X'}$ and $g^{\bar \sigma \sigma}$ etc.\ denote the entries of the corresponding inverse matrix, we have the following statement from~\cite{Sch02}.

\begin{proposition} \label{p:asymptotics-preliminaries}
With $0 < \alpha \leq 1$ from the above theorem, we have
\begin{enumerate}
\item[(i)]   $g^{\bar \sigma \sigma} \sim |\sigma|^2 \log^2 (1/|\sigma|^2)$,
\item[(ii)]  $g^{\bar \sigma i}, g^{\bar \jmath \sigma} = O\big(|\sigma| \log^{1-\alpha} (1/|\sigma|^2)\big)$, $i, j = 2, \ldots, n$,
\item[(iii)] $g^{\bar \imath i} \sim 1$, $i = 2, \ldots, n$ and
\item[(iv)]  $g^{\bar \jmath i} \to 0$ as $\sigma \to 0$, $i, j = 2, \ldots, n$, $i \neq j$,
\end{enumerate}
where $a \sim b$ denotes the existence of a constant $c > 0$ such that $\frac{1}{c} \, a \leq b \leq ca$.
\end{proposition}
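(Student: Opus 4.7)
The plan is to convert the geometric information quoted from Schumacher (the volume-form theorem and the convergence of $g_{X'}$ in $D$-parallel directions to a K\"ahler-Einstein metric on~$D$) into asymptotics for the direct coefficients $g_{k\bar l}$, and then recover the entries of the inverse matrix by a Schur-complement computation. Writing the fundamental-form matrix in block form
\[
  (g_{k\bar l}) = \begin{pmatrix} A & B^* \\ B & C \end{pmatrix}, \qquad A = g_{\sigma\bar\sigma}, \quad B = (g_{i\bar\sigma})_{i \geq 2}, \quad C = (g_{i\bar\jmath})_{i,j \geq 2},
\]
we have, in terms of the Schur complement $S = A - B^* C^{-1} B$, the formulas
\[
  g^{\bar\sigma \sigma} = S^{-1}, \qquad g^{\bar\sigma i} = -S^{-1}(C^{-1}B)^{i}, \qquad g^{\bar\jmath i} = (C^{-1})^{ji} + S^{-1}(C^{-1} B B^* C^{-1})^{ji},
\]
so that everything reduces to asymptotics for $A$, $B$ and $C$.

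First I would assemble three ingredients. (a) From the quasi-isometry of $g_{X'}$ to the product of the Poincar\'e metric on $\Delta^*$ and the Euclidean metric on $\Delta^{n-1}$ one obtains $A \sim 1/(|\sigma|^2 \log^2(1/|\sigma|^2))$. (b) From the stated convergence of the restricted fundamental form to the K\"ahler-Einstein metric $g^D$ on $D$ in $D$-parallel directions, $C \to g^D$ smoothly as $\sigma \to 0$ with $g^D$ smooth and non-degenerate on~$D$; choosing normal coordinates on~$D$ at the base point then arranges $C \to I$, the identity matrix. (c) For the cross terms one needs the refined estimate
\[
  g_{\sigma\bar\jmath} = O\!\left(\frac{1}{|\sigma|\,\log^{1+\alpha}(1/|\sigma|^2)}\right), \qquad j = 2, \ldots, n,
\]
with the same exponent $\alpha$ as in the volume-form theorem. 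Ingredient~(c) is the delicate analytic input; extracting it from Schumacher's analysis in \cite{Sch02} (matching the $\nu$-correction in the volume form against the K\"ahler identities on the off-diagonal entries) is the main obstacle in the proof.

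Given (a)--(c), the four claims follow by substitution. First, $B^* C^{-1} B = O\bigl(|\sigma|^{-2} \log^{-2-2\alpha}\bigr)$ is of strictly lower order than $A \sim |\sigma|^{-2}\log^{-2}$, hence $S \sim A$ and $g^{\bar\sigma\sigma} \sim |\sigma|^2 \log^2(1/|\sigma|^2)$, which is~(i). Then
\[
  |g^{\bar\sigma i}| = S^{-1}\,\bigl|(C^{-1} B)^{i}\bigr| = O\bigl(|\sigma|^2 \log^2 \cdot |\sigma|^{-1} \log^{-1-\alpha}\bigr) = O\bigl(|\sigma|\,\log^{1-\alpha}(1/|\sigma|^2)\bigr),
\]
proving~(ii). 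Finally, for the $(i,j)$-block with $i,j \geq 2$, we have $(C^{-1})^{ji} \to \delta_{ij}$ by~(b), while the correction is dominated by $S^{-1}|B|^2\,\|C^{-1}\|^2 = O\bigl(\log^{-2\alpha}(1/|\sigma|^2)\bigr) \to 0$; this simultaneously gives~(iii) and~(iv). The whole argument is thus a Schur-complement reduction to the single non-trivial estimate~(c).
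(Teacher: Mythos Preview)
The paper does not give a self-contained proof of this proposition: in the preliminaries it is simply quoted from \cite{Sch02}, and in its later restatement (Proposition~\ref{p:asymptotics}) the paper again invokes Proposition~1 of \cite{Sch02} for parts (i)--(iii), remarking only that the surface-case argument there carries over to higher dimensions, and derives (iv) from Schumacher's convergence theorem together with the diagonalisation of $\omega_D$ at~$p$. Your Schur-complement reduction is therefore strictly more explicit than anything appearing here, and it is correct. In particular you have isolated exactly the right non-trivial analytic input: the cross-term estimate~(c), $g_{\sigma\bar\jmath} = O\bigl(|\sigma|^{-1}\log^{-1-\alpha}(1/|\sigma|^2)\bigr)$, is precisely what the Poincar\'e-type quasi-isometry alone does \emph{not} supply (that gives only the borderline $O(|\sigma|^{-1}\log^{-1})$ via Cauchy--Schwarz), and the extra factor $\log^{-\alpha}$ is exactly what the paper later needs in the proof of Proposition~\ref{p:square-integrability}. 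One small caveat on~(iv): your argument yields $(C^{-1})^{ji} \to \bigl((g^D)^{-1}\bigr)^{ji}$, which equals $\delta_{ij}$ only at the base point~$p$ where the coordinates were chosen to diagonalise $\omega_D$; at nearby points of $D$ the off-diagonal limit need not vanish. This is harmless for the paper's purposes, since (iv) is only ever invoked after shrinking the neighbourhood $U$ so that $|g^{\bar\jmath i}|$ falls below a prescribed threshold, and continuity of $(g^D)^{-1}$ on $D$ makes that possible.
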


%
%
\section{Framed stability}

Let $(X,D)$ be a canonically polarized framed manifold, i.~e.\ $X$ is a compact complex manifold and $D$ is a smooth divisor in $X$ such that $K_X \tensor [D]$ is ample. As above, we write $X' := X \setminus D$ for the complement of $D$ in $X$. When looking for a good notion of stability of torsion-free coherent analytic sheaves on~$X$ with respect to the framed manifold $(X,D)$, the critical aspect is the definition of the degree of such sheaves. The following two notions seem reasonable.

\begin{itemize}
\item Since $H := K_X \tensor [D]$ is an ample line bundle on~$X$, there is the notion of $(K_X \tensor [D])$-stability. In this case, we define the degree of a torsion-free coherent analytic sheaf~$\E$ on~$X$ as
\[
  \deg_H(\E) = (c_1(\E) \cup c_1(H)^{n-1}) \cap [X].
\]
This means that the degree is computed with respect to a K\"ahler metric on~$X$ whose fundamental form is obtained from the curvature form of a positive Hermitian metric in the line bundle $K_X \tensor [D]$.

\item By Kobayashi's theorem, there is a unique (up to a constant multiple) complete K\"ahler-Einstein metric $g_{X'}$ on~$X'$ with negative Ricci curvature and Poincar\'e-type growth near the divisor $D$. We can thus define the degree of a torsion-free coherent analytic sheaf~$\E$ on~$X$ as
\[
  \deg_{X'}(\E) = \int_{X'} c_1(\E) \wedge \omega_{X'}^{n-1},
\]
where $\omega_{X'}$ is the fundamental form of $g_{X'}$. When following this approach, we have to make sure that the integral is well-defined and, in particular, independent of the choice of the closed real $(1,1)$-form representing $c_1(\E)$.
\end{itemize}

The aim of this section is to show that these two ways of computing the degree of a torsion-free coherent analytic sheaf on~$X$ are equivalent and so there is only one notion of ``framed stability'' of such sheaves. Note that while the first approach is a special case of stability in the ordinary (un-framed) sense on~$X$ (namely, with respect to a special polarization), the second approach is not a special case of stability in the ordinary sense on~$X'$ because here one only considers subsheaves on~$X$ instead of $X'$.

In order to show the well-definedness of $\deg_{X'}(\E)$, we need the following lemma.

\begin{lemma} \label{l:integrable}
If $\eta$ is a smooth real $(1,1)$-form on~$X$, we have
\[
  \int_{X'} |\Lambda_g \eta| dV_g < \infty,
\]
where $g = g_{X'}$ is the complete K\"ahler-Einstein metric on~$X'$ with volume form $dV_g$ and $\Lambda_g$ is the formal adjoint of forming the $\wedge$-product with the fundamental form $\omega_{X'}$ of $g_{X'}$.
\end{lemma}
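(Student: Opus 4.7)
The plan is to show that $\Lambda_g\eta$ is bounded on $X'$ and then invoke the finiteness of the volume of $(X',g_{X'})$, which is stated in the paper (following Kobayashi).

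Away from $D$ this is trivial: the set $X \setminus U$, where $U$ is any open neighbourhood of $D$, is compact and $\Lambda_g\eta$ is continuous there, hence bounded. The substance of the argument is therefore purely local near $D$. I would cover $D$ by finitely many coordinate neighbourhoods of the type used in Proposition~\ref{p:asymptotics-preliminaries}, with local coordinates $(\sigma, z^2, \ldots, z^n)$ in which $D$ is cut out by $\sigma = 0$, and work in one such chart.

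In such a chart, write
\[
  \Lambda_g \eta = g^{\bar\sigma\sigma}\eta_{\sigma\bar\sigma}
   + \sum_{i=2}^{n}\bigl(g^{\bar\sigma i}\eta_{i\bar\sigma}+g^{\bar\imath\sigma}\eta_{\sigma\bar\imath}\bigr)
   + \sum_{i,j=2}^{n} g^{\bar\jmath i}\eta_{i\bar\jmath}.
\]
Because $\eta$ is smooth on all of $X$, every coefficient $\eta_{i\bar\jmath}$ (including the ones involving $\sigma$) is a bounded smooth function on the chart. Proposition~\ref{p:asymptotics-preliminaries} then gives the desired bounds term by term: the first summand is $O\bigl(|\sigma|^2\log^2(1/|\sigma|^2)\bigr)$, the mixed ones are $O\bigl(|\sigma|\log^{1-\alpha}(1/|\sigma|^2)\bigr)$, the diagonal terms with $i=j \geq 2$ are $O(1)$ by $g^{\bar\imath i}\sim 1$, and the off-diagonal ones with $i\neq j$, $i,j\geq 2$, tend to $0$ as $\sigma\to 0$. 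All four types of contributions are therefore bounded in a neighbourhood of $D$, and combining the finitely many charts gives $\Lambda_g\eta \in L^\infty(X')$.

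Finally, since $(X',g_{X'})$ has finite volume, one concludes
\[
  \int_{X'} |\Lambda_g \eta|\,dV_g
  \;\leq\; \|\Lambda_g\eta\|_{L^\infty(X')}\cdot \vol_{g_{X'}}(X')
  \;<\; \infty.
\]
The only nontrivial step is the local estimate of $\Lambda_g\eta$ near $D$, and the work has really been done by Proposition~\ref{p:asymptotics-preliminaries}: the singular factor $g^{\bar\sigma\sigma}$ in the contraction is compensated by the vanishing of its coordinate factor $|\sigma|^2\log^2(1/|\sigma|^2)$, so that, despite the Poincar\'e-type blow-up of the volume form, $\Lambda_g\eta$ itself stays bounded. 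The residual difficulty is therefore just the bookkeeping to check that each of the four groups of terms above is controlled, which is routine once the asymptotics are in hand.
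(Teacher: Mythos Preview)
Your argument is correct and follows essentially the same line as the paper's proof: both show that $|\Lambda_g\eta|$ is bounded on $X'$ by writing $\sqrt{-1}\,\Lambda_g\eta = g^{\bar\jmath i}\eta_{i\bar\jmath}$ in the coordinates of Proposition~\ref{p:asymptotics-preliminaries}, observing that every $g^{\bar\jmath i}$ is bounded there while the $\eta_{i\bar\jmath}$ are smooth on $X$, and then invoking the finite volume of $(X',g_{X'})$. Your version is simply a more explicit term-by-term unpacking of the same estimate.
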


\begin{proof}
Using local coordinates $z^1, \ldots, z^n$ on an open neighbourhood $U \subset X$ of a point $p \in D$ and writing
\begin{align*}
  \omega_{X'} &= \sqrt{-1} g_{i \bar \jmath} dz^i \wedge dz^{\bar \jmath}, \\
  \eta        &= \eta_{i \bar \jmath} dz^i \wedge dz^{\bar \jmath}
\end{align*}
with smooth local functions $\eta_{i \bar \jmath}$, $i, j = 1, \ldots, n$, we have
\[
  \sqrt{-1} \Lambda_g \eta = g^{\bar \jmath i} \eta_{i \bar \jmath}
\]
and thus
\[
  |\Lambda_g \eta|^2 = g^{\bar \jmath i} \eta_{i \bar \jmath} g^{\bar \ell k} \eta_{k \bar \ell}.
\]
If, in particular, $(z^1, z^2, \ldots, z^n) = (\sigma, z^2, \ldots, z^n)$ is the coordinate system of Proposition~\ref{p:asymptotics-preliminaries}, $g^{\bar \jmath i}$ is bounded for $i, j = 1, \ldots, n$. Since the $\eta_{i \bar \jmath}$ are smooth functions, we obtain that $|\Lambda_g \eta|$ is bounded. The claim then follows by the finite volume of $(X', g_{X'})$.
\end{proof}

Furthermore, we need the following generalization of Stokes' theorem for complete Riemannian manifolds due to Gaffney.

\begin{theorem*}[Gaffney, \cite{Ga54}]
Let $(M, ds_M^2)$ be an orientable complete Riemannian manifold of real dimension $2n$ whose Riemann tensor is of class $\C^2$. Let $\gamma$ be a $(2n-1)$-form on $M$ of class $\C^1$ such that both $\gamma$ and $d\gamma$ are in $L^1$. Then
\[
  \int_M d\gamma = 0.
\]
\end{theorem*}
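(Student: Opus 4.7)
The plan is to reduce Gaffney's statement to the ordinary Stokes' theorem on compact domains by means of a sequence of cut-offs with controlled derivative, exploiting the completeness of $M$ and the $L^1$ hypothesis on $\gamma$ and $d\gamma$.

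First I would fix a base point $p_0 \in M$ and set $r(x) := d_M(p_0, x)$. By the Hopf-Rinow theorem, completeness implies that closed geodesic balls are compact, so $r$ is a proper Lipschitz function on $M$. After a smoothing step (for instance by convolution in a partition of unity subordinate to a locally finite exhaustion), I would obtain a proper $\C^\infty$ function $\tilde r \colon M \to [0,\infty)$ with $|\nabla \tilde r| \leq C_0$ for some constant. Choosing one-variable cut-offs $\chi_k \in \C^\infty(\RR, [0,1])$ with $\chi_k \equiv 1$ on $[0,k]$, $\chi_k \equiv 0$ on $[2k,\infty)$, and $|\chi_k'| \leq 2/k$, I then set $\varphi_k := \chi_k \circ \tilde r$, so that each $\varphi_k$ has compact support, $\varphi_k \to 1$ pointwise, and $|d\varphi_k| \leq 2C_0/k$ uniformly on $M$.

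Next, the compactly supported $(2n-1)$-form $\varphi_k \gamma$ is of class $\C^1$, so ordinary Stokes' theorem yields $\int_M d(\varphi_k \gamma) = 0$, which I would expand as
\[
  \int_M \varphi_k \, d\gamma \; = \; -\int_M d\varphi_k \wedge \gamma.
\]
Letting $k \to \infty$, the dominated convergence theorem (using $|\varphi_k| \leq 1$ and $d\gamma \in L^1(M)$) shows that the left-hand side tends to $\int_M d\gamma$. For the right-hand side, the uniform bound on $|d\varphi_k|$ together with $\gamma \in L^1(M)$ yields
\[
  \Bigl| \int_M d\varphi_k \wedge \gamma \Bigr|
  \; \leq \; \frac{2 C_0}{k} \int_M |\gamma| \, dV_M \; \longrightarrow \; 0,
\]
and combining both limits gives the claim.

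The main obstacle is the smoothing step: on a general complete Riemannian manifold one has no a priori curvature bound, so constructing a smooth proper exhaustion with uniformly bounded gradient requires some care. A workable approach is to build $\tilde r$ by hand via a partition of unity on a geodesic ball exhaustion; alternatively, one can bypass smoothing entirely by applying Stokes' theorem for Lipschitz forms (Rademacher's theorem guarantees that $d\tilde r$ is defined almost everywhere with the required bound), which is essentially the route taken in Gaffney's original argument. The $\C^2$ regularity hypothesis on the Riemann tensor and $\C^1$ regularity on $\gamma$ are precisely what is needed to legitimize the classical Stokes application to $\varphi_k\gamma$ in the first step.
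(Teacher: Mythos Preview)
The paper does not supply a proof of this theorem: Gaffney's result is quoted from \cite{Ga54} and used as a black box in the proofs of Lemma~\ref{l:degree on complement}, Lemma~\ref{l:framed degree with metric}, and Proposition~\ref{p:uniqueness}. There is therefore nothing in the paper against which to compare your argument.

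That said, your sketch is the standard proof and is correct in outline. The cut-off construction $\varphi_k = \chi_k \circ \tilde r$ with $|d\varphi_k| \leq 2C_0/k$ is exactly what drives the argument, and your two limit computations (dominated convergence for $\int_M \varphi_k\, d\gamma$, the crude bound $\frac{2C_0}{k}\|\gamma\|_{L^1}$ for the other term) are the right ones. You are also right to flag the smoothing of the distance function as the only genuine technical point: on a manifold with no curvature assumptions one cannot simply mollify globally, but the partition-of-unity construction you mention (or Greene--Wu's smoothing of Lipschitz functions, or working directly with the Lipschitz $r$ and Rademacher as in Gaffney's original paper) closes this gap. One small remark: the $\C^2$ hypothesis on the Riemann tensor is not really ``precisely what is needed'' for Stokes on $\varphi_k\gamma$---ordinary Stokes on a compact domain with smooth boundary only needs $\varphi_k\gamma \in \C^1$, which follows from $\gamma \in \C^1$ and $\varphi_k \in \C^\infty$. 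Gaffney's regularity assumption on the metric is there to ensure that the geodesic structure (hence the distance function and its smoothing) behaves well enough for the exhaustion argument.
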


\begin{lemma} \label{l:degree on complement}
If $\E$ is a torsion-free coherent analytic sheaf on~$X$, the integral
\begin{equation} \label{e:degree on complement}
  \deg_{X'}(\E) = \int_{X'} c_1(\E) \wedge \omega_{X'}^{n-1}
\end{equation}
is well-defined and, in particular, independent of the choice of a closed real $(1,1)$-form $c_1(\E)$ representing the first Chern class of $\E$.
\end{lemma}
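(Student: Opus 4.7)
The plan is to split the lemma into finiteness of the integral for a fixed representative and invariance under the choice, and then to deduce the second from Gaffney's theorem applied to a suitably chosen primitive of the difference. Since $\E$ is torsion-free and coherent on the projective manifold $X$, its first Chern class admits smooth closed real $(1,1)$-form representatives on all of $X$ (e.g.\ via $c_1(\E) = c_1(\det\E)$ for the line bundle $\det\E = (\Lambda^{\rank\E}\E)^{\ast\ast}$ and Chern-Weil). For any such representative $\rho$, finiteness of (\ref{e:degree on complement}) follows at once from Lemma \ref{l:integrable}, since $\rho \wedge \omega_{X'}^{n-1}$ differs from $(\Lambda_{g_{X'}}\rho)\, \omega_{X'}^n$ only by a dimensional factor.

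For invariance, I take two such representatives $\rho_1,\rho_2$, whose difference is a $d$-exact smooth real $(1,1)$-form on $X$; the $\partial\bar\partial$-lemma on the compact Kähler manifold $X$ then gives $\rho_1 - \rho_2 = d\psi$ with $\psi := d^c\phi$ for some $\phi \in \C^\infty(X,\RR)$, so that $\psi$ is a smooth real $1$-form on all of $X$. Because $\omega_{X'}$ is closed, the $(2n-1)$-form $\gamma := \psi \wedge \omega_{X'}^{n-1}$ satisfies $d\gamma = (\rho_1 - \rho_2) \wedge \omega_{X'}^{n-1}$, and the required vanishing $\int_{X'} d\gamma = 0$ will follow from Gaffney's theorem.

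The two $L^1$ hypotheses of Gaffney must still be checked. For $d\gamma$ this is precisely Lemma \ref{l:integrable} applied to $\eta = \rho_1 - \rho_2$. For $\gamma$ itself, the pointwise $g_{X'}$-norm of $\omega_{X'}^{n-1}$ is a dimensional constant, whereas $|\psi|_{g_{X'}}^2$ is a bilinear expression in the coefficients $\psi_i, \overline{\psi_j}$ weighted by the inverse-metric entries $g^{\bar\jmath i}$. In the coordinates $(\sigma, z^2, \ldots, z^n)$ of Proposition \ref{p:asymptotics-preliminaries}, every $g^{\bar\jmath i}$ is bounded on $X'$ (with $g^{\bar\sigma\sigma}$ in fact tending to zero like $|\sigma|^2 \log^2(1/|\sigma|^2)$), so the smoothness of $\psi$ on $X$ forces $|\psi|_{g_{X'}}$ to be bounded on $X'$. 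The finite volume of $(X', g_{X'})$ then places $\gamma$ in $L^1(X')$, and Gaffney's theorem concludes.

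The main obstacle lies in this boundedness of $|\psi|_{g_{X'}}$: dualising the Poincaré-type metric could in principle blow up smooth cotangent vectors inherited from $X$, and it is precisely the Schumacher-type asymptotics of Proposition \ref{p:asymptotics-preliminaries} that rule this out. Once those asymptotics are in hand, every remaining step is essentially formal.
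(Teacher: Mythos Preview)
Your proof is correct and follows essentially the same route as the paper: finiteness via Lemma~\ref{l:integrable}, then independence via Gaffney's theorem applied to $\gamma = (\text{primitive}) \wedge \omega_{X'}^{n-1}$. The only differences are cosmetic---the paper takes the primitive $\zeta$ directly from de~Rham exactness rather than the $\partial\bar\partial$-lemma, and relegates the $L^1$ bound on $\gamma$ to an ``as can be shown analogously'' remark where you spell it out using Proposition~\ref{p:asymptotics-preliminaries}.
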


\begin{proof}
Let $\eta$ be a closed smooth real $(1,1)$-form on~$X$ representing $c_1(\E)$. Then we have
\[
  \eta \wedge \omega_{X'}^{n-1} = (n-1)! (\Lambda_g \eta) \frac{\omega_{X'}^n}{n!}
\]
and Lemma \ref{l:integrable} implies the existence of the integral \eqref{e:degree on complement}.

Now if $\tilde \eta$ is another such $(1,1)$-form representing $c_1(\E)$, we have $\eta - \tilde \eta = d \zeta$ for a smooth $1$-form $\zeta$ on~$X$. It follows that
\[
  \int_{X'} \eta \wedge \omega_{X'}^{n-1} - \int_{X'} \tilde \eta \wedge \omega_{X'}^{n-1}
  = \int_{X'} d\zeta \wedge \omega_{X'}^{n-1}
  = \int_{X'} d\gamma,
\]
where $\gamma := \zeta \wedge \omega_{X'}^{n-1}$ is a smooth $(2n-1)$-form on~$X'$ such that $d\gamma$ and (as can be shown analogously) $\gamma$ itself are in $L^1$. Now apply Gaffney's theorem with $(M, ds_M^2)$ being the underlying Riemannian manifold of $(X', g_{X'})$, which is complete by Kobayashi's theorem. This implies $\int_{X'} d\gamma = 0$, thus proving the claim.
\end{proof}

We can now prove the equivalence of the two notions of degree discussed above.

\begin{proposition} \label{p:equivalence of degrees}
Let $\E$ be a torsion-free coherent analytic sheaf on~$X$. Then
\[
  \deg_H(\E) = \deg_{X'}(\E),
\]
where $H := K_X \tensor [D]$.
\end{proposition}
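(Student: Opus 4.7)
The plan is to realize both degrees as integrals $\int_{X'} \eta \wedge \theta^{n-1}$ for one and the same closed smooth real $(1,1)$-form $\eta$ on~$X$ representing $c_1(\E)$, and to show that the ``framed'' choice $\theta = \omega_{X'}$ may be replaced by a smooth Chern form $\theta = \omega_H$ on all of~$X$ without changing the value of the integral. Fix a smooth Hermitian metric $h_0$ on $H = K_X \tensor [D]$ whose Chern curvature form $\omega_H := -\sqrt{-1}\,\partial\bar\partial \log h_0$ is a K\"ahler form on~$X$; this is possible since $H$ is ample. By Chern-Weil, $\omega_H$ represents $c_1(H)$ (modulo the usual normalization), and since $D$ has Lebesgue measure zero,
\[
  \deg_H(\E) = \int_X \eta \wedge \omega_H^{n-1} = \int_{X'} \eta \wedge \omega_H^{n-1}.
\]
It therefore suffices to prove $\int_{X'} \eta \wedge \omega_{X'}^{n-1} = \int_{X'} \eta \wedge \omega_H^{n-1}$.

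The crucial geometric input is that on~$X'$ both forms represent the same cohomology class. Indeed, $\Ric(\omega_{X'}) = -\omega_{X'}$ identifies $\omega_{X'}$ with a representative of $c_1(K_X)|_{X'}$; since the section $\sigma$ trivializes $[D]$ on~$X'$ we have $c_1(H)|_{X'} = c_1(K_X)|_{X'}$, so that $\omega_H|_{X'}$ represents the same class. The global Monge-Amp\`ere formulation of the K\"ahler-Einstein equation then produces a smooth function $\psi$ on~$X'$ with
\[
  \omega_{X'} = \omega_H + \sqrt{-1}\,\partial\bar\partial \psi \quad \text{on } X',
\]
whose asymptotic behaviour near~$D$, together with that of its first and second derivatives, is pinned down by Schumacher's theorem quoted above and Proposition~\ref{p:asymptotics-preliminaries}.

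Given this identity, I expand
\[
  \omega_{X'}^{n-1} - \omega_H^{n-1} = \sum_{k=1}^{n-1} \binom{n-1}{k}\, \omega_H^{n-1-k} \wedge (\sqrt{-1}\,\partial\bar\partial \psi)^k
\]
and, using $d$-closedness of $\eta$ and $\omega_H$ together with $d(\sqrt{-1}\,\bar\partial\psi) = \sqrt{-1}\,\partial\bar\partial\psi$, rewrite each summand as the exterior derivative
\[
  \eta \wedge \omega_H^{n-1-k} \wedge (\sqrt{-1}\,\partial\bar\partial\psi)^k = d\bigl[\eta \wedge \omega_H^{n-1-k} \wedge (\sqrt{-1}\,\partial\bar\partial\psi)^{k-1} \wedge \sqrt{-1}\,\bar\partial\psi\bigr].
\]
Gaffney's theorem applied to each summand on the complete manifold $(X', g_{X'})$ then forces all correction integrals to vanish, concluding the argument.

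The main obstacle is verifying the $L^1$ hypothesis of Gaffney's theorem for each primitive $\gamma_k := \eta \wedge \omega_H^{n-1-k} \wedge (\sqrt{-1}\,\partial\bar\partial\psi)^{k-1} \wedge \sqrt{-1}\,\bar\partial\psi$ and its differential $d\gamma_k$ with respect to $g_{X'}$. Since $\sqrt{-1}\,\partial\bar\partial\psi = \omega_{X'} - \omega_H$ is uniformly bounded in the $g_{X'}$-norm and the logarithmic growth of $\bar\partial\psi$ near~$D$ is compensated by the decay of the inverse-metric entries $g^{\bar j i}$ from Proposition~\ref{p:asymptotics-preliminaries}, the pointwise norms $|\gamma_k|_{g_{X'}}$ and $|d\gamma_k|_{g_{X'}}$ are bounded; together with the finite volume of $(X', g_{X'})$ --- the same mechanism underlying Lemma~\ref{l:integrable} --- this yields the required integrability. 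This pointwise estimate is where the real work lies; everything else is formal.
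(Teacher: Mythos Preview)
Your approach is correct and reaches the same conclusion, but it differs from the paper's argument in the mechanism used to kill the correction terms. The paper also writes $\omega_{X'} = \omega_H|_{X'} + \sqrt{-1}\,\partial\dbar\psi$ (with the explicit potential $\psi = -2\log\log(1/\|\sigma\|^2) + \log\bigl(1+\nu/\log^\alpha(1/\|\sigma\|^2)\bigr)$ coming from Schumacher's volume-form expansion), but then argues via an exhaustion $X_\eps = \{\|\sigma\| > \eps\}$ and the ordinary Stokes theorem on the compact pieces, reducing to boundary integrals over $\partial X_\eps$ that are shown to vanish as $\eps \to 0$ by hand, using the $\C^1$ Cauchy integral formula to control terms like $\int_{\|\sigma\|=\eps} f\,d\sigma/\sigma$. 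You instead bypass the boundary analysis entirely by invoking Gaffney's theorem for each primitive $\gamma_k$ on the complete manifold $(X',g_{X'})$, which is more in the spirit of how the paper handles Lemma~\ref{l:degree on complement} and the later Donaldson-functional computations. The payoff of your route is uniformity: once you check the pointwise $g_{X'}$-boundedness of $\eta$, $\omega_H$, $\sqrt{-1}\,\partial\dbar\psi$, and $\dbar\psi$ (the last via $g^{\bar\sigma\sigma}\sim|\sigma|^2\log^2(1/|\sigma|^2)$ against $\psi_{\bar\sigma}\sim 1/(\bar\sigma\log(1/|\sigma|^2))$, together with the quasi-coordinate bound on $\dbar\nu$), finite volume finishes everything at once. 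The paper's route is more explicit and avoids any appeal to the full pointwise norm of $\dbar\psi$, trading this for the Cauchy-formula computation. One small point: your phrase ``logarithmic growth of $\dbar\psi$'' undersells the blow-up of the $\bar\sigma$-coefficient, which is like $|\sigma|^{-1}(\log(1/|\sigma|^2))^{-1}$; what matters (and what you correctly use) is that this is exactly compensated by $g^{\bar\sigma\sigma}$, so $|\dbar\psi|_{g_{X'}}$ is genuinely bounded.
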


\begin{proof}
Let $\eta$ be a closed smooth real $(1,1)$-form on~$X$ representing $c_1(\E)$. Then we have
\[
  \deg_H(\E) = \int_X \eta \wedge \omega_X^{n-1},
\]
where $\omega_X$ is obtained from the curvature form of a positive Hermitian metric in $H = K_X \tensor [D]$, i.~e.
\[
  \omega_X
  = - \Ric\left(\frac{\Omega}{||\sigma||^2}\right)
  = \sqrt{-1} \partial \dbar \log\left(\frac{\Omega}{||\sigma||^2}\right)
\]
with a smooth volume form $\Omega$ on~$X$ and a smooth Hermitian metric~$h$ in $[D]$ with induced norm~$||{\cdot}||$ such that $\omega_X$ is positive definite. Here, as above, $\sigma$ denotes a canonical holomorphic section of~$[D]$. On the other hand, we have
\[
  \deg_{X'}(\E) = \int_{X'} \eta \wedge \omega_{X'}^{n-1},
\]
where $\omega_{X'}$ is the fundamental form of the metric $g_{X'}$ on~$X'$. By Schumacher's theorem on the asymptotics of this metric and the fact that it is K\"ahler-Einstein, there is a number $0 < \alpha \leq 1$ such that (in particular) for all $k \geq 2$ and $\beta \in (0,1)$, we have
\[
  \omega_{X'}
  = - \Ric(\omega_{X'}^n)
  = \sqrt{-1} \partial \dbar \log \left(\frac{2 \Omega}{||\sigma||^2 \log^2(1/||\sigma||^2)}
    \left(1 + \frac{\nu}{\log^\alpha (1/||\sigma||^2)}\right)\right)
\]
with a function $\nu \in \C^{k,\beta}(X')$. A comparison of $\omega_X$ and $\omega_{X'}$ yields
\[
  \begin{split}
    \omega_{X'} & = \sqrt{-1} \partial \dbar \log \left(\frac{2 \Omega}{||\sigma||^2 \log^2 (1/||\sigma||^2)}
                    \left(1 + \frac{\nu}{\log^\alpha (1/||\sigma||^2)}\right)\right) \\
                & = \sqrt{-1} \partial \dbar \log \left(\frac{\Omega}{||\sigma||^2}\right)
                    - 2 \sqrt{-1} \partial \dbar \log \log (1/||\sigma||^2)
                    + \sqrt{-1} \partial \dbar \log \left(1 + \frac{\nu}{\log^\alpha (1/||\sigma||^2)}\right)
  \end{split}
\]
and thus
\begin{equation} \label{e:comparing fundamental forms}
\omega_{X'} = \omega_X|_{X'}
              - 2 \sqrt{-1} \partial \dbar \log \log (1/||\sigma||^2)
              + \sqrt{-1} \partial \dbar \log \left(1 + \frac{\nu}{\log^\alpha (1/||\sigma||^2)}\right).
\end{equation}
For notational convenience, we first do the proof for the case of $n=2$ and then explain the necessary changes for the proof to work in higher dimensions as well.

Since $X' = \bigcup_{\eps > 0} X_\eps$ with $X_\eps = \{ x \in X : ||\sigma(x)|| > \eps \}$, we have
\[
  \deg_H(\E) = \lim_{\eps \to 0} \int_{X_\eps} \eta \wedge \omega_X
  \quad \text{and} \quad
  \deg_{X'}(\E) = \lim_{\eps \to 0} \int_{X_\eps} \eta \wedge \omega_{X'}
\]
and, therefore,
\[
  \begin{split}
  \deg_{X'}(\E) & =     \deg_H(\E) - 2 \sqrt{-1} \lim_{\eps \to 0}
                        \int_{X_\eps} \eta \wedge \partial \dbar \log \log (1/||\sigma||^2) \\
                & \quad + \sqrt{-1} \lim_{\eps \to 0}
                        \int_{X_\eps} \eta \wedge \partial \dbar \log \left(1 + \frac{\nu}{\log^\alpha (1/||\sigma||^2)}\right) \\
                & =     \deg_H(\E) + 2 \sqrt{-1} \lim_{\eps \to 0}
                        \int_{X_\eps} d\left(\eta \wedge \partial \log \log (1/||\sigma||^2)\right) \\
                & \quad - \sqrt{-1} \lim_{\eps \to 0}
                        \int_{X_\eps} d\left(\eta \wedge \partial \log \left(1 + \frac{\nu}{\log^\alpha (1/||\sigma||^2)}\right)\right) \\
                & =     \deg_H(\E) + 2 \sqrt{-1} \lim_{\eps \to 0}
                        \int_{\partial X_\eps} \eta \wedge \partial \log \log (1/||\sigma||^2) \\
                & \quad - \sqrt{-1} \lim_{\eps \to 0}
                        \int_{\partial X_\eps} \eta \wedge \partial \log \left(1 + \frac{\nu}{\log^\alpha (1/||\sigma||^2)}\right)
  \end{split}
\]
by Stokes' theorem. It remains to show that
\begin{align}
  \label{e:vanishing integral 1}
  \lim_{\eps \to 0} \int_{\partial X_\eps} \eta \wedge \partial \log \log (1/||\sigma||^2) &= 0, \\
  \label{e:vanishing integral 2}
  \lim_{\eps \to 0} \int_{\partial X_\eps} \eta \wedge \partial \log \left(1 + \frac{\nu}{\log^\alpha (1/||\sigma||^2)}\right) &= 0.
\end{align}
We have $\partial X_\eps = \{ x \in X : ||\sigma(x)|| = \eps \}$. By abuse of notation, we regard $\sigma$ as a local coordinate on an open neighbourhood $U \subset X$ of a point $p \in D$ and regard $h$ as a smooth positive function on $U$. Then we have local coordinates $(\sigma, z)$ on $U$ such that $||\sigma||^2 = |\sigma|^2/h$. In \eqref{e:vanishing integral 1}, we have
\[
  \partial \log \log (1/||\sigma||^2)
  = \frac{\partial \log (1/||\sigma||^2)}{\log(1/||\sigma||^2)}
  = \frac{\partial \log h - \partial \log |\sigma|^2}{\log(1/||\sigma||^2)}
  = \frac{\partial \log h - \frac{d\sigma}{\sigma}}{\log(1/\eps^2)}
  \quad \text{on } \partial X_\eps,
\]
and thus
\[
  \int_{\partial X_\eps} \eta \wedge \partial \log \log (1/||\sigma||^2)
  = \frac{1}{\log(1/\eps^2)} \left(
    \int_{\partial X_\eps} \eta \wedge \partial \log h
    - \int_{\partial X_\eps} \eta \wedge \frac{d\sigma}{\sigma}
    \right).
\]
The first integral is clearly bounded uniformly in $\eps$. The second integral can be estimated as follows. By Fubini's theorem, it suffices to estimate a one-dimensional line integral of the form
\[
  \int_{||\sigma|| = \eps} \frac{f(\sigma) d\sigma}{\sigma},
\]
where $f$ is a smooth locally defined function involving the coefficients of $\eta$. Since by the $\C^1$ version of Cauchy's integral formula (see, e.~g., H\"ormander \cite{Hoe90}, Theorem 1.2.1), we have
\[
  f(0) = \frac{1}{2 \pi \sqrt{-1}} \int_{||\sigma|| = \eps} \frac{f(\sigma) d\sigma}{\sigma}
         + \frac{1}{2 \pi \sqrt{-1}} \iint_{||\sigma|| < \eps} \frac{\partial f}{\partial \bar \sigma} \frac{d\sigma \wedge d\bar \sigma}{\sigma}
\]
and $f(0)$ is a finite number, it suffices to estimate the area integral. The latter is, however, bounded uniformly in $\eps$ since $f$ is smooth and, writing $\sigma = r e^{i \varphi}$ in polar coordinates, we have
\[
  \left|\frac{d\sigma \wedge d\bar \sigma}{\sigma}\right|
  = \left|\frac{- 2 \sqrt{-1} r dr \wedge d\varphi}{r e^{i \varphi}}\right|
  = 2 |dr \wedge d\varphi|.
\]
As $\log(1/\eps^2) \to \infty$ as $\eps \to 0$, we obtain \eqref{e:vanishing integral 1}. In \eqref{e:vanishing integral 2}, we have
\[
  \partial \log\left(1 + \frac{\nu}{\log^\alpha(1/||\sigma||^2)}\right)
  = \frac{1}{1 + \frac{\nu}{\log^\alpha(1/\eps^2)}}
    \left(\frac{\partial \nu}{\log^\alpha(1/\eps^2)}
    - \frac{\alpha \nu \left(\partial \log h - \frac{d \sigma}{\sigma}\right)}{\log^{\alpha+1}(1/\eps^2)}
    \right)
  \quad \text{on } \partial X_\eps,
\]
and thus
\[
  \begin{split}
    \int_{\partial X_\eps} \eta \wedge \partial \log\left(1 + \frac{\nu}{\log^\alpha(1/||\sigma||^2)}\right)
    & =     \frac{1}{\log^\alpha(1/\eps^2)} \int_{\partial X_\eps} \frac{\eta \wedge \partial \nu}{1 + \frac{\nu}{\log^\alpha(1/\eps^2)}} \\
    & \quad - \frac{\alpha}{\log^{\alpha+1}(1/\eps^2)} \int_{\partial X_\eps} \frac{\eta \wedge \nu \partial \log h}
            {1 + \frac{\nu}{\log^\alpha(1/\eps^2)}} \\
    & \quad + \frac{\alpha}{\log^{\alpha+1}(1/\eps^2)} \int_{\partial X_\eps} \frac{\eta \wedge \nu \frac{d\sigma}{\sigma}}
            {1 + \frac{\nu}{\log^\alpha(1/\eps^2)}}. \\
  \end{split}
\]
Again, by Fubini's theorem, it suffices to consider the one-dimensional situation. Since $\nu$ is in $\C^{k,\beta}(X')$ with $k \geq 2$, $\nu$ is (in particular) bounded on~$X'$ and so
\[
  \sup_{\partial X_\eps} \left|\frac{1}{1 + \frac{\nu}{\log^\alpha(1/\eps^2)}}\right|
\]
is bounded uniformly in $\eps$ and so is the second integral above. Moreover, if $v$ is the quasi-coordinate corresponding to $\sigma$, we have
\[
  \partial \nu
  = \frac{\partial \nu}{\partial v} dv
  = \frac{\partial \nu}{\partial v} \frac{(|v|^2 - 1)(v - 1)}{(\bar v - 1) \log(1/|\sigma|^2)} \frac{d\sigma}{\sigma}
\]
by \eqref{e:quasi-differentials}, where $\frac{\partial \nu}{\partial v}$ is bounded on~$X'$. Consequently, the other two integrals can be bounded by using Cauchy's integral formula as above. Since
\[
  \log^\alpha(1/\eps^2) \rightarrow \infty
  \quad \text{ and } \quad
  \log^{\alpha+1}(1/\eps^2) \rightarrow \infty
  \quad \text{ as } \eps \rightarrow 0,
\]
we obtain \eqref{e:vanishing integral 2}. This concludes the proof for the case of $n=2$.

In dimension $n>2$, one expands the expression $\omega_{X'}^{n-1}$, where $\omega_{X'} = \omega_X|_{X'} + \xi$ is written as in \eqref{e:comparing fundamental forms} with a closed smooth real $(1,1)$-form $\xi$ on~$X'$. Then one has to show the vanishing for $\eps \to 0$ of several integrals of the forms \eqref{e:vanishing integral 1} and \eqref{e:vanishing integral 2} with additional terms which are either equal to $\omega_X$ or to $\xi$. Since $\omega_X$ is smooth on~$X$, it does not destroy the convergence. Concerning $\xi$, an argument similar to the one in the proof of Lemma \ref{l:degree on complement} shows that this does not influence the convergence either. Thus the proof works in any dimension.
\end{proof}

We can now proceed in parallel to the compact case.

\begin{definition}[Framed degree, framed slope]
Let $\E$ be a torsion-free coherent analytic sheaf on~$X$.
\begin{enumerate}
\item[(i)] We call the integer
\[
  \deg_{(X,D)}(\E) := \deg_H(\E) = \deg_{X'}(\E)
\]
from Proposition~\ref{p:equivalence of degrees} the {\em framed degree\/} or the {\em degree in the framed sense\/} of $\E$ with respect to the framed manifold $(X,D)$.
\item[(ii)] If $\rank(\E) > 0$, we call
\[
  \mu_{(X,D)}(\E) := \frac{\deg_{(X,D)}(\E)}{\rank(\E)}
\]
the {\em framed slope\/} or the {\em slope in the framed sense\/} of $\E$ with respect to the framed manifold $(X,D)$.
\end{enumerate}
\end{definition}

\begin{definition}[Framed stability]
A torsion-free coherent analytic sheaf~$\E$ on~$X$ is said to be {\em stable in the framed sense\/} with respect to the framed manifold $(X,D)$ if the inequality
\[
  \mu_{(X,D)}(\F) < \mu_{(X,D)}(\E)
\]
holds for every coherent subsheaf~$\F$ of $\E$ with $0 < \rank(\F) < \rank(\E)$.
\end{definition}

A holomorphic vector bundle $E$ on~$X$ is said to be {\em stable in the framed sense\/} if its sheaf $\E = \O_X(E)$ of holomorphic sections has this property. Since the framed stability of $E$ with respect to $(X,D)$ is a special case of the stability of $E$ in the ordinary sense on~$X$ (namely, it is the stability with respect to the polarization $K_X \tensor [D]$), we have the following corollary from the classical situation.

\begin{corollary} \label{c:framed stable-simple}
If $E$ is a stable holomorphic vector bundle on~$X$ in the framed sense with respect to $(X,D)$, then $E$ is {\em simple}, i.~e.\ every holomorphic section of its bundle of endomorphisms is a constant multiple of the identity endomorphism.
\end{corollary}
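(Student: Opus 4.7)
The plan is to reduce the corollary to the well-known classical fact that ordinary Mumford-Takemoto stability implies simplicity. The crucial observation, already made in the sentence preceding the corollary, is that by Proposition~\ref{p:equivalence of degrees} the framed degree and the $H$-degree (for $H = K_X \tensor [D]$) coincide for every torsion-free coherent analytic sheaf on~$X$. Hence the framed stability of~$E$ is literally the same condition as its $H$-stability in the usual polarized sense on~$X$, and one can simply quote the classical ``stable implies simple'' theorem for holomorphic vector bundles on compact K\"ahler manifolds.

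For completeness I would sketch the classical argument as follows. Let $f \in H^0(X, \End(E))$ be an arbitrary holomorphic endomorphism. Pick any point $x_0 \in X$ and an eigenvalue $\lambda \in \CC$ of the fiberwise endomorphism $f_{x_0}$, and set $g := f - \lambda \, \id_E$. Then $g$ is a global holomorphic endomorphism of~$E$ that fails to be injective at~$x_0$. Suppose for contradiction that $g \not\equiv 0$. Then $\ker g$ is a coherent subsheaf of $\E = \O_X(E)$ with $0 < \rank(\ker g) < \rank(\E)$, and $\operatorname{im} g$ is a subsheaf of $\E$ of the complementary rank, automatically torsion-free as a subsheaf of the locally free sheaf~$\E$. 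By $H$-stability applied to both subsheaves, $\mu_H(\ker g) < \mu_H(\E)$ and $\mu_H(\operatorname{im} g) < \mu_H(\E)$; but the short exact sequence $0 \to \ker g \to \E \to \operatorname{im} g \to 0$ together with the additivity of degree and rank forces $\mu_H(\E)$ to equal a convex combination of these two strictly smaller slopes, a contradiction. Therefore $g \equiv 0$ and $f = \lambda \, \id_E$, which is the desired simplicity.

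The main obstacle in adapting the classical result to the framed setting was reconciling the two natural notions of degree on $(X,D)$, and this has been resolved once and for all in Proposition~\ref{p:equivalence of degrees}. Given that identification, the present corollary is essentially formal: no new analytic input from the Poincar\'e-type metric or the quasi-coordinate framework is required, and one is spared having to restrict subsheaves to~$X'$. The only routine points to verify in the classical argument are the coherence of $\ker g$ and the torsion-freeness of $\operatorname{im} g$, both of which are immediate from the locally free nature of~$\E$.
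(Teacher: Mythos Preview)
Your reduction is exactly the paper's approach: the corollary is stated there without proof, with the remark that framed stability is literally $(K_X \tensor [D])$-stability on the compact manifold~$X$, so the classical ``stable $\Rightarrow$ simple'' theorem applies directly. Your supplementary sketch of that classical argument is also correct (the implicit point making $\rank(\ker g)>0$ work is that $\det g$ is a global holomorphic function on the compact~$X$, hence vanishes identically once it vanishes at~$x_0$).
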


Note, however, that the framed stability of $E$ with respect to $(X,D)$ seems not to imply the simplicity of $E' := E|_{X'}$. Thus, given a holomorphic section of $\End(E)$ over $X'$, one has to make sure that it can be holomorphically extended to the whole of~$X$ in order to conclude that it is a scalar multiple of the identity.

%
%
\section{Framed Hermitian-Einstein metrics}

Regarding a suitable notion of Hermitian-Einstein metrics in the framed sense in a holomorphic vector bundle $E$ on $X$, our interest lies on smooth Hermitian-Einstein metrics in the restriction $E'$ of $E$ to $X'$ with respect to the complete K\"ahler-Einstein metric. In order to ensure that everything will be well-defined in the following considerations, we first have to make a restriction on the class of smooth Hermitian metrics in $E'$ given by Simpson in \cite{Si88}: Denote by $\P$ the space of smooth Hermitian metrics $h'$ in $E'$ such that
\[
  \int_{X'} |\Lambda_g F_{h'}|_{h'} dV_g < \infty,
\]
where $F_{h'}$ is the curvature form of the Chern connection of the Hermitian holomorphic vector bundle $(E',h')$ on~$X'$. First of all, if $h'$ is the restriction to $E'$ of a smooth Hermitian metric~$h$ in $E$, we have $h' \in \P$ by Lemma \ref{l:integrable}. Now the definition of $\P$ is such that for any $h' \in \P$, the integral
\[
  \deg_{X'}(E',h') := \int_{X'} \frac{\sqrt{-1}}{2 \pi} \tr(F_{h'}) \wedge \omega_{X'}^{n-1}
\]
is well-defined. However, in order to ensure that it equals the framed degree $\deg_{(X,D)}(E)$ of $E$ with respect to $(X,D)$, we have to impose an additional condition on $h'$. Following Simpson \cite{Si88}, we denote by $S_{h'}$ the bundle of endomorphisms of $E'$ which are self-adjoint with respect to $h'$. Furthermore, we let $P(S_{h'})$ be the space of smooth sections $s$ of $S_{h'}$ such that
\[
  ||s||_P := \sup_{X'} |s|_{h'} + ||\nabla'' s||_{L^2} + ||\Delta' s||_{L^1} < \infty,
\]
where $\nabla = \nabla' + \nabla''$ is the covariant derivative on smooth sections of $\End(E')$ with respect to the Chern connection of the Hermitian holomorphic vector bundle $(E',h')$ and $\Delta' = \sqrt{-1} \Lambda_g \nabla'' \nabla'$ is the $\nabla'$-Laplacian on smooth sections of $\End(E')$ with respect to $h'$ and the metric $g_{X'}$. Here, the $L^p$ norms are also defined with respect to $h'$ and $g_{X'}$. Now, according to \cite{Si88}, the space $\P$ can be turned into an analytic manifold with local charts
\[
  \begin{array}{rcl}
    P(S_{h'}) & \longrightarrow & \P      \\
    s         & \longmapsto     & h' e^s
  \end{array} .
\]
Divide $\P$ into maximal components such that each of these charts covers a component. Choose a smooth Hermitian metric~$h_0$ in $E$ and use the same notation $h_0$ for its restriction to $E'$. The component $\P_0$ of $\P$ containing $h_0$ is easily seen to be independent of the choice of $h_0$ because the restrictions to $E'$ of any two smooth Hermitian metrics in $E$ lie in the same component of $\P$. This space $\P_0$ turns out to be a suitable space in which to look for Hermitian-Einstein metrics with respect to the complete K\"ahler-Einstein metric.

\begin{definition}[Framed Hermitian-Einstein metric] \label{d:framed hermitian-einstein metric}
A smooth Hermitian metric~$h'$ in $E'$ is called a {\em framed Hermitian-Einstein metric\/} or {\em Hermitian-Einstein metric in the framed sense\/} in $E$ with respect to the framed manifold $(X,D)$ if $h' \in \P_0$ and
\[
  \sqrt{-1} \Lambda_g F_{h'} = \lambda_{h'} \id_{E'}
\]
with a constant $\lambda_{h'} \in \RR$, which is then called the {\em Einstein factor\/} of $h'$.
\end{definition}

\begin{lemma} \label{l:framed degree with metric}
If $h' \in \P_0$, we have
\[
  \deg_{X'}(E', h') = \deg_{(X,D)}(E).
\]
\end{lemma}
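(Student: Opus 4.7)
The plan is to fix a smooth Hermitian metric $h_0$ in $E$ (defined on all of $X$), restrict it to $E'$, and show the statement first for $h_0$ and then for general $h' \in \P_0$ by comparison. For $h_0$ itself, the form $\tfrac{\sqrt{-1}}{2\pi}\tr F_{h_0}$ is a smooth closed real $(1,1)$-form on $X$ representing $c_1(E)$, so by the very definition of $\deg_{X'}(\E)$ (Lemma~\ref{l:degree on complement}) together with Proposition~\ref{p:equivalence of degrees}, we immediately get $\deg_{X'}(E',h_0) = \deg_{(X,D)}(E)$. By the description of the manifold structure on $\P$ recalled just before the lemma, the chart $s \mapsto h_0 e^s$ covers the component $\P_0$, so any $h' \in \P_0$ can be written as $h' = h_0 e^s$ for a single section $s \in P(S_{h_0})$. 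The task thus reduces to proving that $\deg_{X'}(E', h_0 e^s) = \deg_{X'}(E', h_0)$.

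For this I would use the standard curvature calculation on the determinant line bundle: since $\det(h_0 e^s) = \det(h_0) \cdot e^{\tr s}$, one obtains the pointwise identity $\tr F_{h'} - \tr F_{h_0} = \bar\partial\partial\, \tr s = d(\partial\, \tr s)$. Because $\omega_{X'}$ is closed, it follows that
\[
  \tfrac{\sqrt{-1}}{2\pi}\bigl(\tr F_{h'} - \tr F_{h_0}\bigr) \wedge \omega_{X'}^{n-1} = \tfrac{\sqrt{-1}}{2\pi}\, d\gamma, \qquad \gamma := \partial\, \tr s \wedge \omega_{X'}^{n-1}.
\]
The plan is then to apply Gaffney's theorem to $\gamma$ on the complete manifold $(X', g_{X'})$, which forces the difference of degrees to vanish and completes the proof.

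The step that needs care is verifying that $\gamma, d\gamma \in L^1$, and this is where the definition of $P(S_{h_0})$ is decisive. For $d\gamma$: since the Chern connection commutes with trace on $\End(E')$, one has $\Delta'(\tr s) = \tr(\Delta' s)$, hence $|\Lambda_g\, \bar\partial\partial\, \tr s| \leq \rank(E)\cdot |\Delta' s|_{h_0}$, which is in $L^1$ by the definition of the norm $\|\cdot\|_P$. For $\gamma$: self-adjointness of $s$ gives $|\nabla' s|_{h_0} = |\nabla'' s|_{h_0}$ pointwise, so $\nabla' s \in L^2$; combined with the pointwise boundedness of $|\omega_{X'}^{n-1}|_g$ and the finite volume of $(X', g_{X'})$, Cauchy--Schwarz yields $\gamma \in L^1$. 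Smoothness of $s$ on $X'$ supplies the $\C^1$ regularity required by Gaffney. The main technical obstacle is really this $L^1$-verification, since it is exactly what the rather elaborate space $P(S_{h'})$ was engineered to guarantee; once it is in place, the rest of the argument is a purely formal consequence of Stokes in its complete Riemannian form.
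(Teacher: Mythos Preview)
Your proposal is correct and follows essentially the same route as the paper: reduce to showing that the difference $\tr F_{h'}-\tr F_{h_0}=\dbar\partial\,\tr s$ integrates to zero against $\omega_{X'}^{n-1}$, and conclude via Gaffney's theorem using the finiteness built into $P(S_{h_0})$. The only cosmetic differences are that the paper takes $\gamma$ to involve $\dbar\tr s$ rather than $\partial\tr s$ (these are complex conjugates since $\tr s$ is real) and verifies $d\gamma\in L^1$ directly from $h',h_0\in\P$ rather than from the $\|\Delta' s\|_{L^1}$ term of $\|s\|_P$.
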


\begin{proof}
First of all, because of $h' \in \P_0 \subset \P$, the integral
\[
  \deg_{X'}(E',h')
  = \int_{X'} \frac{\sqrt{-1}}{2 \pi} \tr(F_{h'}) \wedge \omega_{X'}^{n-1}
  = \int_{X'} \frac{\sqrt{-1} (n-1)!}{2 \pi} \tr(\Lambda_g F_{h'}) \frac{\omega_{X'}^n}{n!}
\]
is well-defined. Furthermore, $\frac{\sqrt{-1}}{2 \pi} \tr(F_{h_0})$ is a closed real $(1,1)$-form on~$X$ representing the first Chern class $c_1(E)$ and thus
\[
  \deg_{(X,D)}(E)
  = \deg_{X'}(E)
  = \int_{X'} \frac{\sqrt{-1}}{2 \pi} \tr(F_{h_0}) \wedge \omega_{X'}^{n-1}
  = \int_{X'} \frac{\sqrt{-1} (n-1)!}{2 \pi} \tr(\Lambda_g F_{h_0}) \frac{\omega_{X'}^n}{n!}.
\]
We therefore have to show that
\begin{equation} \label{e:einstein factor claim}
  \int_{X'} (\tr(\Lambda_g F_{h'}) - \tr(\Lambda_g F_{h_0})) \frac{\omega_{X'}^n}{n!} = 0.
\end{equation}
Because of $h' \in \P_0$, we have $h' = h_0 e^s$ with $s \in P(S_{h_0})$. By Bott-Chern theory, we know that
\[
  \tr(\Lambda_g F_{h'}) - \tr(\Lambda_g F_{h_0}) = \Lambda_g \dbar \partial \tr(s).
\]
From $h', h_0 \in \P$, it follows that $\Lambda_g \dbar \partial \tr(s)$ is integrable on~$X'$. Also, because of $s \in P(S_{h_0})$, we know that $\dbar \tr(s) = \tr(\nabla'' s)$ is integrable on~$X'$. By Gaffney's theorem, \eqref{e:einstein factor claim} follows.
\end{proof}

\begin{remark} \label{r:framed einstein factor}
In particular, if $h'$ is a framed Hermitian-Einstein metric in $E$ with respect to $(X,D)$ and Einstein factor $\lambda_{h'}$, Lemma \ref{l:framed degree with metric} implies that
\[
  \lambda_{h'} = \frac{2 \pi \mu_{(X,D)}(E)}{(n-1)! \vol_g(X')}
\]
as in the classical theory, where $\vol_g(X') = \int_{X'} \frac{\omega_{X'}^n}{n!}$ is the {\em volume\/} of $X'$ with respect to $g_{X'}$.
\end{remark}

We can now show the uniqueness of a framed Hermitian-Einstein metric in a simple bundle up to a constant multiple. We give a detailed proof of this proposition in order to show how to carry over the arguments from the classical situation. The existence proof in the following section will then work in a similar way, except for the considerations on the $L^2$ condition mentioned in the introduction.

\begin{proposition} \label{p:uniqueness}
Let $E$ be a simple holomorphic vector bundle on a canonically polarized framed manifold $(X,D)$. Then if $h_0'$ and $h_1'$ are Hermitian-Einstein metrics in $E$ in the framed sense with respect to $(X,D)$, there is a constant $c > 0$ such that $h_1' = c \cdot h_0'$.
\end{proposition}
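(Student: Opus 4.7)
The plan is to adapt the classical uniqueness argument, with Gaffney's theorem replacing the compact-manifold integration by parts. Since $h_0'$ and $h_1'$ both lie in $\P_0$, the single-chart description of $\P$ yields an $s \in P(S_{h_0'})$ with $h_1' = h_0' e^s$. Write $f := e^s = h_0'^{-1} h_1'$, so that $f$ is positive and self-adjoint with respect to $h_0'$, and the $P$-norm conditions on $s$ give $\sup_{X'} |s|_{h_0'} < \infty$, $\nabla'' s \in L^2$ and $\Delta' s \in L^1$. By Lemma~\ref{l:framed degree with metric} and Remark~\ref{r:framed einstein factor}, the two framed Hermitian-Einstein metrics share the same Einstein factor $\lambda$.

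The core identity is the standard relation between Chern curvatures,
\[
  F_{h_1'} - F_{h_0'} = \dbar\bigl(f^{-1} \partial_{h_0'} f\bigr),
\]
which, after contraction with $\sqrt{-1}\Lambda_g$ and using $\sqrt{-1}\Lambda_g F_{h_0'} = \sqrt{-1}\Lambda_g F_{h_1'} = \lambda \id_{E'}$, becomes $\sqrt{-1}\Lambda_g \dbar(f^{-1}\partial_{h_0'} f) = 0$. A Weitzenb\"ock-type computation from the compact theory then produces a pointwise identity of the shape
\[
  \sqrt{-1}\Lambda_g \partial\dbar \tr(f) = \bigl|\nabla'' f\bigr|^2 \geq 0
\]
on $X'$, so that $\tr(f)$ is subharmonic with respect to $g_{X'}$.

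The next step is to integrate this identity over $X'$. The $L^\infty$-bound on $s$ transfers to $L^\infty$-bounds on $f$ and $\tr(f)$; together with $\nabla'' s \in L^2$ and $\Delta' s \in L^1$, one checks that the $(2n-1)$-form $\gamma := \partial \tr(f) \wedge \omega_{X'}^{n-1}$ and its exterior derivative $d\gamma$ both lie in $L^1$. Gaffney's theorem then yields $\int_{X'} d\gamma = 0$, i.e.
\[
  \int_{X'} \sqrt{-1}\Lambda_g \partial\dbar \tr(f)\, \frac{\omega_{X'}^n}{n!} = 0.
\]
Since the integrand is non-negative, it vanishes identically. Hence $\nabla'' f = 0$, and $f$ is a holomorphic endomorphism of $E'$ on $X'$.

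Finally, as $f = e^s$ is bounded on $X'$ and $D$ is a smooth divisor, the Riemann extension theorem extends $f$ to a holomorphic section of $\End(E)$ over all of $X$. The simplicity of $E$ then forces $f = c \cdot \id_E$ for some $c \in \CC$, and positivity of $f$ gives $c > 0$, whence $h_1' = c \cdot h_0'$. The main technical obstacle is translating the $P(S_{h_0'})$-bounds on $s$ into the precise $L^1$-integrability of $\gamma$ and $d\gamma$ required by Gaffney's theorem: once $\sup|s|$ is finite the needed estimates on $e^s$ and its derivatives follow routinely from $\nabla s$, but they must be carried out in quasi-coordinates to remain within the bounded-geometry framework in which the Weitzenb\"ock identity and Gaffney's theorem both apply cleanly.
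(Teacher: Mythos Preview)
Your argument is correct, but it follows a different route from the paper's proof. The paper uses Donaldson's functional: it joins $h_0'$ and $h_1'$ by the path $h_t' = h_0' e^{ts}$, defines
\[
  L(t) = \int_{X'}\int_0^t \tr\big(s(\sqrt{-1}\Lambda_g F_{h_u'} - \lambda\,\id_{E'})\big)\,du\,\frac{\omega_{X'}^n}{n!},
\]
observes $L'(0)=L'(1)=0$ from the Hermitian--Einstein condition, and then computes $L''(t) = ||\nabla'' s||_{L^2}^2$ after applying Gaffney's theorem to the form $\tr(s\,\nabla'_{h_t'}s)\wedge\omega_{X'}^{n-1}/(n-1)!$; convexity forces $\nabla'' s = 0$, and the Riemann extension plus simplicity step is then identical to yours. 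Your direct approach via the pointwise inequality for $\tr(e^s)$ is the L\"ubke-style argument: it avoids the path and the functional entirely and needs only a single Weitzenb\"ock identity together with Gaffney, so it is shorter and more elementary. The paper's route, by contrast, exhibits the Donaldson functional explicitly, which is the natural object for the subsequent existence proof and hence fits more organically into the surrounding framework. One small correction: the precise identity is $\sqrt{-1}\Lambda_g\partial\dbar\tr(f) = |f^{-1/2}\,\nabla'' f|_{h_0',g}^2$ rather than $|\nabla'' f|^2$; since $s$ (and hence $f$, $f^{-1}$) is uniformly bounded this does not affect your $L^1$ estimates for $\gamma$ and $d\gamma$ or the vanishing conclusion.
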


\begin{proof}
First of all, we have
\begin{equation} \label{e:hermitian-einstein-same-lambda}
  \sqrt{-1} \Lambda_g F_{h_0'} = \lambda \id_{E'} = \sqrt{-1} \Lambda_g F_{h_1'}
  \quad \text{with } \lambda = \frac{2 \pi \mu_{(X,D)}(E)}{(n - 1)! \vol_g(X')}
\end{equation}
by Remark \ref{r:framed einstein factor}. Since $h_0'$ and $h_1'$ lie in the same component $\P_0$ of $\P$, we know that $h_1' = h_0' e^s$ for some $s \in P(S_{h_0'})$. Join $h_0'$ and $h_1'$ by the path $h_t' = h_0' e^{ts}$ for $t \in [0,1]$ and define the function $L: [0,1] \to \CC$ by
\[
  L(t) = \int_{X'} \int_0^t \tr\big(s (\sqrt{-1} \Lambda_g F_{h_u'} - \lambda \id_{E'})\big) du \frac{\omega_{X'}^n}{n!}.
\]
This is a special version of Donaldson's functional. For the convenience of the reader, we give the first part of the now classical argument. The function $L$ is well-defined since for every $t \in [0,1]$, we have
\begin{align*}
         \left|\int_0^t \tr\big(s (\sqrt{-1} \Lambda_g F_{h_u'} - \lambda \id_{E'})\big) du\right|
  &\leq t \sup_{u \in [0,t]} \Big|\big< \sqrt{-1} \Lambda_g F_{h_u'} - \lambda \id_{E'}, s \big>_{h_u'}\Big| \\
  &=    t \Big|\big< \sqrt{-1} \Lambda_g F_{h_{u_0}'} - \lambda \id_{E'}, s \big>_{h_{u_0}'}\Big| \\
  &\leq t \big|\sqrt{-1} \Lambda_g F_{h_{u_0}'} - \lambda \id_{E'}\big|_{h_{u_0}'} |s|_{h_{u_0}'} \\
  &\leq t \left(|\Lambda_g F_{h_{u_0}'}|_{h_{u_0}'} + |\lambda| \sqrt{\rank(E)}\right) ||s||_P
\end{align*}
for some $u_0 \in [0,t]$, where the last expression is integrable over $X'$ with respect to $g_{X'}$ because of $h_{u_0}' \in \P_0 \subset \P$, $s \in P(S_{h_{u_0}'})$ and the finite volume of $(X', g_{X'})$. The first derivative of $L$ is
\[
  L'(t) = \int_{X'} \tr\big(s (\sqrt{-1} \Lambda_g F_{h_t'} - \lambda \id_{E'})\big) \frac{\omega_{X'}^n}{n!}
\]
and the Hermitian-Einstein condition \eqref{e:hermitian-einstein-same-lambda} yields $L'(0) = 0 = L'(1)$. By Bott-Chern theory, we know that
\[
  \frac{d}{dt} (\Lambda_g F_{h_t'}) = \Lambda_g \nabla'' \nabla_{h_t'}' s,
\]
where
\[
  \nabla_{h_t'} = \nabla_{h_t'}' + \nabla''
\]
is the covariant derivative on smooth sections of $\End(E')$ with respect to the Chern connection of the Hermitian holomorphic vector bundle $(E', h_t')$. Consequently, the second derivative of $L$ is
{\allowdisplaybreaks \begin{align*}
  L''(t) &= \int_{X'} \tr\big(s (\sqrt{-1} \Lambda_g \nabla'' \nabla_{h_t'}' s)\big) \frac{\omega_{X'}^n}{n!} \\
         &= \sqrt{-1} \int_{X'} \tr(s \nabla'' \nabla_{h_t'}' s) \wedge \frac{\omega_{X'}^{n-1}}{(n-1)!} \\
         &= - \sqrt{-1} \int_{X'} \tr(\nabla'' s \wedge \nabla_{h_t'}' s) \wedge \frac{\omega_{X'}^{n-1}}{(n-1)!}
            + \sqrt{-1} \int_{X'} \dbar \tr(s \nabla_{h_t'}' s) \wedge \frac{\omega_{X'}^{n-1}}{(n-1)!} \\
         &= ||\nabla'' s||_{L^2}^2 + \sqrt{-1} \int_{X'} d\gamma
\end{align*}}
since $s$ is self-adjoint with respect to $h_t'$, where the $L^2$ norm is as above and
\[
  \gamma = \tr(s \nabla_{h_t'}' s) \wedge \frac{\omega_{X'}^{n-1}}{(n-1)!}
\]
is a smooth $(2n-1)$-form on~$X'$. We are going to verify the hypotheses of Gaffney's theorem. We have
\[
  |\tr(s \nabla_{h_t'}' s)|
  \leq |\nabla_{h_t'}' s|_{h_t'} |s|_{h_t'}
  = |\nabla'' s|_{h_t'} |s|_{h_t'}
\]
and from $s \in P(S_{h_t'})$, we know that $|\nabla'' s|_{h_t'}$ is $L^2$ and $|s|_{h_t'}$ is bounded on~$X'$. It follows that $\gamma$ is $L^2$ on~$X'$ and, in particular, $L^1$ due to the finite volume of $(X', g_{X'})$. Moreover, we know that
\[
  |\Delta_{h_t'}' s|_{h_t'} = |\Lambda_g \nabla'' \nabla_{h_t'}' s|_{h_t'}
\]
is $L^1$ on~$X'$. Thus, $d\gamma$ is seen to be $L^1$ on~$X'$ as well. By Gaffney's theorem, it follows that $\int_{X'} d\gamma = 0$ and we obtain
\[
  L''(t) = ||\nabla'' s||_{L^2}^2
\]
for all $t \in [0,1]$. In particular, $L''(t)$ is independent of $t$. From $L'(0) = 0 = L'(1)$, it follows that $L' \equiv 0$ and thus $L'' \equiv 0$ on $[0,1]$. This implies that $\nabla'' s = 0$, i.~e.\ $s$ is a holomorphic section of $\End(E')$. As above, let $h_0$ be a smooth Hermitian metric in $E$. Then $h_0$ and $h_0'$ lie in the same component $\P_0$ of $\P$ and the boundedness of $|s|_{h_0'}$ implies the boundedness of $|s|_{h_0}$. By Riemann's extension theorem, $s$ can be extended to a holomorphic section of $\End(E)$ over $X$. Since the bundle $E$ is simple by hypothesis, we have $s = a \id_E$ for some number $a$, which must be real as $s$ is self-adjoint. Finally, we obtain
\[
  h_1' = h_0' e^s = c \cdot h_0'
  \quad \text{with } c = e^a > 0
\]
as claimed.
\end{proof}

%
%
\section{Square-integrability conditions}

Having established the notions of framed stability and framed Hermitian-Einstein metrics, we can now state a framed version of the Kobayashi-Hitchin correspondence.

\begin{theorem} \label{t:framed kobayashi-hitchin}
Let $E$ be a holomorphic vector bundle on a canonically polarized framed manifold $(X,D)$ such that $E$ is stable in the framed sense with respect to $(X,D)$. Then there is a unique (up to a constant multiple) Hermitian-Einstein metric in $E$ in the framed sense with respect to $(X,D)$.
\end{theorem}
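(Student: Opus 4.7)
Uniqueness is immediate from earlier results: framed stability of $E$ forces $E$ to be simple by Corollary \ref{c:framed stable-simple}, and Proposition \ref{p:uniqueness} then gives uniqueness up to a positive real constant. The main task is therefore existence, and the plan is to adapt Simpson's heat-equation method from \cite{Si88} to the Poincaré-type setting of $(X', g_{X'})$, using R.~Kobayashi's quasi-coordinates to provide the bounded geometry that Simpson's analysis requires. Concretely, fix a smooth Hermitian metric on $E$, restrict it to $E'$ to obtain a reference metric $h_0 \in \P_0$ (which lies in $\P$ by Lemma \ref{l:integrable}), and consider the Donaldson heat flow
\[
  h^{-1} \frac{\partial h}{\partial t} = -2\bigl(\sqrt{-1}\,\Lambda_g F_h - \lambda \id_{E'}\bigr), \qquad h|_{t=0} = h_0,
\]
where $\lambda$ is the constant Einstein factor from Remark \ref{r:framed einstein factor}. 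Short- and long-time existence of a smooth solution $h_t \in \P_0$ should go through essentially as in \cite{Si88}: interior parabolic and Schauder estimates are uniform on each quasi-coordinate patch because $(X', g_{X'})$ has bounded geometry, and wherever Simpson integrates by parts on a compact manifold I would substitute Gaffney's theorem, justified by $h_t \in \P$ and the finite volume of $(X', g_{X'})$ exactly as in the proof of Proposition \ref{p:uniqueness}.

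With the flow in hand, Simpson's standard dichotomy applies. Either $\sup_{X'}|\log(h_0^{-1}h_t)|_{h_0}$ stays bounded uniformly as $t \to \infty$, in which case a subsequential limit in the quasi-coordinate Hölder spaces $\C^{k,\beta}(X')$ yields a metric $h_\infty \in \P_0$ with $\sqrt{-1}\,\Lambda_g F_{h_\infty} = \lambda \id_{E'}$, i.~e.\ a framed Hermitian-Einstein metric solving the problem. Or this sup-norm diverges, in which case Simpson's rescaling and weak-compactness argument produces a non-trivial measurable section $\pi$ of $\End(E')$ satisfying $\pi^2 = \pi$, self-adjoint with respect to $h_0$, and $(\id_{E'}-\pi)\dbar\pi = 0$, with both $\pi$ and $\dbar\pi$ of class $L^2$ with respect to $g_{X'}$ and $h_0$, and with an associated slope inequality that would destabilize $E$ provided $\pi$ can be upgraded to the orthogonal projection onto a coherent subsheaf of $\E = \O_X(E)$.

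The hard part, and the place where the framed setting really enters, is precisely this last upgrade. The theorem of Uhlenbeck--Yau \cite{UY86} realizes such a weakly holomorphic projection as a coherent subsheaf, but only when the $L^2$ conditions on $\pi$ and $\dbar\pi$ are imposed with respect to a smooth Hermitian metric on the compact manifold $X$, not merely with respect to the singular Poincaré-type metric on $X'$. I would therefore invoke Proposition \ref{p:asymptotics-preliminaries} together with the estimates from Schumacher \cite{Sch02}: in the coordinates $(\sigma, z^2, \ldots, z^n)$ near $D$, the inverse-metric entries $g^{\bar\jmath i}$ remain bounded up to the divisor and the volume form of $g_{X'}$ is given explicitly, so a pointwise comparison of norms together with a comparison of volume forms shows that $L^2$-integrability with respect to $g_{X'}$ forces $L^2$-integrability with respect to any smooth background metric on $X$. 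Uhlenbeck--Yau then yields a coherent subsheaf $\F \subset \E$ with $0 < \rank(\F) < \rank(\E)$, and the equivalence of degrees from Proposition \ref{p:equivalence of degrees} converts the slope inequality from $\deg_{X'}$ into $\mu_{(X,D)}(\F) \geq \mu_{(X,D)}(\E)$, contradicting framed stability. This rules out the divergent alternative, so the flow must converge to the desired framed Hermitian-Einstein metric.
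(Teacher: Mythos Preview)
Your outline matches the paper's proof almost step by step: uniqueness from Corollary~\ref{c:framed stable-simple} and Proposition~\ref{p:uniqueness}; existence by running Simpson's heat flow on $(X', g_{X'})$ using bounded geometry in quasi-coordinates with Gaffney's theorem in place of Stokes; and, as the genuinely framed-specific step, transferring the $L_1^2$ condition on the weakly holomorphic projection $\pi$ from the Poincar\'e-type metric to a smooth metric on $X$ so that Uhlenbeck--Yau yields a destabilizing subsheaf defined over all of $X$. The paper isolates this last transfer as Proposition~\ref{p:square-integrability} and proves it via exactly the asymptotics you cite; your phrase ``inverse-metric entries remain bounded'' slightly understates what is needed (one must bound the quadratic form $g^{\bar\jmath i}s_i\bar s_j$ from \emph{below} after multiplying by the Poincar\'e volume density, which is where the precise growth rates of Proposition~\ref{p:asymptotics-preliminaries} enter), but the strategy and the ingredients are the same.
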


The uniqueness follows from Corollary \ref{c:framed stable-simple} and Proposition~\ref{p:uniqueness}. As mentioned in the introduction, the classical existence proof (cf.\ \cite{Do85}, \cite{Si88}) can be applied to the framed situation almost completely using the notions of quasi-coordinates and bounded geometry together with Gaffney's theorem. The only critical aspect is the application of the regularity theorem for weakly holomorphic subbundles (cf.\ \cite{UY86}, \cite{UY89}, \cite{Po05}) to the framed situation. In order for this statement to produce a coherent subsheaf over the compact manifold $X$ as needed by the definition of framed stability, the weakly holomorphic subbundle must be given as an $L_1^2$ section with respect to a smooth K\"ahler metric on~$X$. Since it is first obtained as an $L_1^2$ section with respect to the Poincar\'e-type metric on $X'$, we can complete the proof by showing that the $L_1^2$ condition with respect to this metric implies the $L_1^2$ condition in the ordinary sense, which is the aim of this section.

First we define the relevant spaces of sections. Since $L^2$ sections are only defined almost everywhere and the divisor~$D$ has measure zero, there is no difference between considering $L^2$ sections on~$X$ and on~$X'$, so we introduce all notions on the compact manifold $X$. By the adjunction formula, we see that
\[
  K_D = (K_X \tensor [D])|_D
  \quad \text{is ample,}
\]
so that there is a unique (up to a constant multiple) K\"ahler-Einstein metric $g_D$ on $D$ with negative Ricci curvature. Let $\omega_D$ be its fundamental form and let $\sigma$ be a canonical section of~$[D]$. By abuse of notation, we regard $\sigma$ as a local coordinate function near a point $p \in D$. We can restrict $\omega_{X'}$ to the locally defined sets $D_{\sigma_0} := \{ \sigma = \sigma_0 \} \subset X'$ for small $\sigma_0 > 0$ and there is a notion of locally uniform convergence of $\omega_{X'}|D_{\sigma_0}$ for $\sigma_0 \to 0$. We then have the following convergence theorem.


\begin{theorem*}[Schumacher, \cite{Sch98}, Theorem 1]
$\omega_{X'}|D_{\sigma_0}$ converges to $\omega_D$ locally uniformly as $\sigma_0 \to 0$.
\end{theorem*}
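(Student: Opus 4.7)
My plan is to recast both metrics via complex Monge-Amp\`ere equations coming from the K\"ahler-Einstein condition. By Yau's theorem (applicable on $D$ since $K_D = (K_X \tensor [D])|_D$ is ample by adjunction), $\omega_D$ is the unique K\"ahler-Einstein form in its class with $\Ric(\omega_D) = -\omega_D$; analogously $g_{X'}$ is characterised by R.~Kobayashi's theorem. Working in the local coordinates $(\sigma, z^2, \ldots, z^n)$ of Proposition~\ref{p:asymptotics-preliminaries}, with $D = \{\sigma = 0\}$, I would write $\omega_{X'} = \omega_X|_{X'} + \sqrt{-1}\,\partial \dbar u$ for a K\"ahler form $\omega_X$ on $X$ representing $c_1(K_X \tensor [D])$ and a potential $u$ of Poincar\'e type, and similarly $\omega_D = \omega_X|_D + \sqrt{-1}\,\partial \dbar v$ on $D$. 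Via the adjunction formula, the data of the two Monge-Amp\`ere equations match up, so that if $u(\sigma, z)$ converges sufficiently well to some $v_\infty(z)$ as $\sigma \to 0$, then $v_\infty$ necessarily solves the Monge-Amp\`ere equation characterising $\omega_D$.

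The first main step is to control $u$ in H\"older norms via Schumacher's volume-form expansion (the immediately preceding theorem): the $\C^{k,\beta}(X')$-control on the error $\nu/\log^\alpha(1/\|\sigma\|^2)$ translates, through the K-E equation, into locally uniform $\C^{k,\beta}$ bounds for the quasi-coordinate derivatives of $u$. Combined with the bounded-geometry structure of $(X', g_{X'})$ with respect to the quasi-coordinates, this shows that the family $\{u|_{D_{\sigma_0}}\}_{\sigma_0 > 0}$ is precompact in $\C^{k-1,\beta}$ on every compact subset $K \subset D$. Extracting a subsequence yields a limit $v_\infty$ on $D$, and passing to the limit in the Monge-Amp\`ere equation shows that $v_\infty$ is a potential for a K\"ahler-Einstein form on $D$ in the class $c_1(K_D)$. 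By Yau's uniqueness one has $v_\infty = v$, and a standard subsequence-uniqueness argument upgrades subsequential to full convergence.

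The principal obstacle is justifying that the restricted $n$-dimensional Monge-Amp\`ere equation on the slices $D_{\sigma_0}$ really does converge to the $(n-1)$-dimensional one on $D$; in particular one must control the cross terms $g_{\sigma \bar\jmath}$ and verify that they contribute only lower-order corrections to the determinant $\det(g_{i\bar\jmath})$. Proposition~\ref{p:asymptotics-preliminaries}(ii), which gives $g^{\bar\sigma i} = O\bigl(|\sigma|\log^{1-\alpha}(1/|\sigma|^2)\bigr)$, together with the Poincar\'e-scale estimate $g^{\bar\sigma\sigma} \sim |\sigma|^2 \log^2(1/|\sigma|^2)$ from (i), implies by block-matrix inversion that $g_{\sigma\bar\jmath}$ decays and that $\det(g_{i\bar\jmath})$ splits asymptotically as $g_{\sigma\bar\sigma} \cdot \det(g_{k\bar\ell})_{k,\ell \geq 2}$ modulo strictly lower-order terms. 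This asymptotic splitting is what lets the Monge-Amp\`ere equation on $X'$ reduce, on its tangential projection, to the defining equation of $\omega_D$ in the limit, thus closing the argument.
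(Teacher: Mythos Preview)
The paper does not contain a proof of this statement: it is quoted verbatim as Theorem~1 of Schumacher~\cite{Sch98} and used as an input to the subsequent analysis. There is therefore no ``paper's own proof'' against which to compare your argument.

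That said, your proposal has a circularity problem as written. You invoke Proposition~\ref{p:asymptotics-preliminaries} (parts (i) and (ii)) to control the cross terms $g_{\sigma\bar\jmath}$ and to split the determinant. But in the paper's logical structure, Proposition~\ref{p:asymptotics-preliminaries} is explicitly presented as a \emph{consequence} of the convergence theorem you are trying to prove: see the sentence immediately preceding it (``it is shown that the fundamental form of $g_{X'}$ converges \ldots\ From this, one obtains the following result on the asymptotics''), and the discussion before Proposition~\ref{p:asymptotics} where part~(iv) is derived directly from the convergence theorem. So you cannot appeal to that proposition here without reproving the relevant estimates independently. Setting the circularity aside, your outline---compactness of the restricted potentials via the $\C^{k,\beta}$ volume-form expansion, subsequential limits, passage to the limit in the Monge--Amp\`ere equation, and Yau's uniqueness---is in the right spirit and close to how Schumacher actually argues in~\cite{Sch98}. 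The genuine difficulty you are skating over is that the volume-form expansion (Theorem~2 of~\cite{Sch98}) controls only $\det(g_{i\bar\jmath})$ in $\C^{k,\beta}$; extracting uniform $\C^{k,\beta}$ control on the individual \emph{tangential} components $g_{i\bar\jmath}$ for $i,j\geq 2$ (which is what you need to take limits of the restricted forms) requires a separate argument, and this is precisely the analytic content of Theorem~1 in~\cite{Sch98}. Your block-matrix remark does not supply this, since it already presupposes the off-diagonal decay you need.
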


In what follows, all estimates can be done in a small neighbourhood $U \subset X$ of an arbitrary point $p \in D$. This neighbourhood will be shrinked several times as needed throughout the computation. We can choose coordinates $z^2, \ldots, z^n$ for $D$ on $U \cap D$ such that
\[
  \omega_D = \sqrt{-1} \sum_{i=2}^n dz^i \wedge d\bar z^i
\]
is diagonal at $p$. Regarding $\sigma$ as a local coordinate, we have a coordinate system $(\sigma, z^2, \ldots, z^n)$ on $U$. We write $dV$ for the Euclidean volume element and $dV_g$ for the volume element of the metric $g = g_{X'}$. Then locally we have
\[
  dV = \left(\frac{\sqrt{-1}}{2}\right)^n d\sigma \wedge d\bar \sigma
       \wedge dz^2 \wedge d\bar z^2 \wedge \cdots \wedge dz^n \wedge d\bar z^n
  \quad \text{and} \quad
  dV_g \sim \frac{dV}{|\sigma|^2 \log^2(1/|\sigma|^2)}.
\]
Let $E$ be a holomorphic vector bundle on~$X$ with a smooth Hermitian metric~$h$. We write $\left<{\cdot},{\cdot}\right>$ for the scalar product in the fibres of $E$ induced by $h$ and $||{\cdot}||$ for the corresponding norm in the fibres of $E$.

\begin{definition}[$L^2$ spaces] \mbox{}
\begin{enumerate}
\item[(i)] Let
\[
  L^2(X, E, g) = \left\{ s \text{ measurable section of } E : \int_X ||s||^2 dV_g < \infty \right\}
\]
be the space of $L^2$ sections of $E$ with respect to the metric $g_{X'}$ with the $L^2$ norm
\[
  ||s||_{L^2(X, E, g)} = \left(\int_X ||s||^2 dV_g\right)^{1/2}.
\]
\item[(ii)] Let
\[
  L_1^2(X, E, g) = \left\{ s \in L^2(X, E, g) : \nabla s \in L^2(X, T_X^\ast \tensor E, g) \right\}
\]
be the Sobolev space of $L^2$ sections of $E$ with $L^2$ first-order weak derivatives with respect to $g_{X'}$ with the Sobolev norm
\[
  ||s||_{L_1^2(X, E, g)} = \left(||s||_{L^2(X, E, g)}^2 + ||\nabla s||_{L^2(X, T_X^\ast \tensor E, g)}^2\right)^{1/2}.
\]
Here, $\nabla$ denotes the covariant derivative with respect to the Chern connection of the Hermitian holomorphic vector bundle $(E,h)$, where $\nabla s$ is computed in the sense of currents, $T_X^\ast$ denotes the cotangent bundle of~$X$ and the bundle $T_X^\ast \tensor E$ is endowed with the product of the dual of the complete K\"ahler-Einstein metric in $T_X^\ast$ and the Hermitian metric~$h$ in~$E$.
\end{enumerate}
The spaces $L^2(X, E)$ and $L_1^2(X, E)$ are defined in the ordinary sense, i.~e.\ with respect to a smooth K\"ahler metric on~$X$.
\end{definition}

\begin{remark}
Let $\nabla = \nabla' + \nabla''$ be the decomposition of $\nabla$ into its $(1,0)$ and $(0,1)$ parts. Then for a section $s \in L^2(X, E, g)$, we have $s \in L_1^2(X, E, g)$ if and only if $\nabla's \in L^2(X, \Lambda^{1,0} T_X^\ast \tensor E, g)$ and $\nabla''s \in L^2(X, \Lambda^{0,1} T_X^\ast \tensor E, g)$. In what follows, we only consider $\nabla's$ since then everything follows for $\nabla''s$ in an analogue way.
\end{remark}

We locally write the fundamental form $\omega_{X'}$ of $g_{X'}$ as
\[
  \omega_{X'} = \sqrt{-1} \left(g_{\sigma \bar \sigma} d\sigma \wedge d\bar \sigma
              + \sum_{j=2}^n   g_{\sigma \bar \jmath} d\sigma \wedge d\bar z^j
              + \sum_{i=2}^n   g_{i \bar \sigma} dz^i \wedge d\bar \sigma
              + \sum_{i,j=2}^n g_{i \bar \jmath} dz^i \wedge d\bar z^j \right)
\]
and let
\[
  \begin{tikzpicture}[baseline=(current bounding box.center),
                      every left delimiter/.style={xshift=2ex},
                      every right delimiter/.style={xshift=-2ex}]
    \matrix (m) [matrix of math nodes,
                 nodes in empty cells,
                 left delimiter=(, right delimiter=)] {
      g^{\bar \sigma \sigma} & g^{\bar \sigma 2} & \cdots                               & g^{\bar \sigma n} \\
      g^{\bar 2 \sigma}      &                   &                                      &                   \\
      \vdots                 &                   & (g^{\bar \jmath i})_{j,i=2,\ldots,n} &                   \\
      g^{\bar n \sigma}      &                   &                                      &                   \\
    };
    
    \draw (m-4-2.south west) -- (m-2-2.north west) -- (m-2-4.north east);
  \end{tikzpicture}
  \quad \text{be the inverse matrix of} \quad
  \begin{tikzpicture}[baseline=(current bounding box.center),
                      every left delimiter/.style={xshift=2ex},
                      every right delimiter/.style={xshift=-2ex}]
    \matrix (m) [matrix of math nodes,
                 nodes in empty cells,
                 left delimiter=(, right delimiter=)] {
      g_{\sigma \bar \sigma} & g_{\sigma \bar 2} & \cdots                               & g_{\sigma \bar n} \\
      g_{2 \bar \sigma}      &                   &                                      &                   \\
      \vdots                 &                   & (g_{i \bar \jmath})_{i,j=2,\ldots,n} &                   \\
      g_{n \bar \sigma}      &                   &                                      &                   \\
    };
    
    \draw (m-4-2.south west) -- (m-2-2.north west) -- (m-2-4.north east);
  \end{tikzpicture}.
\]
Then, writing
\[
  \nabla's = s_\sigma d\sigma + \sum_{i=2}^n s_i dz^i
\]
with local sections $s_\sigma$, $s_i$ of $E$, $i = 2, \ldots, n$, the condition $\nabla's \in L^2(X, \Lambda^{1,0} T_X^\ast \tensor E, g)$ reads
\[
  \int \bigg( \left<s_\sigma, s_\sigma\right> g^{\bar \sigma \sigma}
              + \sum_{j=2}^n \left<s_\sigma, s_j\right> g^{\bar \jmath \sigma}
              + \sum_{i=2}^n \left<s_i, s_\sigma\right> g^{\bar \sigma i}
              + \sum_{i,j=2}^n \left<s_i, s_j\right> g^{\bar \jmath i}
       \bigg)
       \frac{dV}{|\sigma|^2 \log^2 (1/|\sigma|^2)} < \infty.
\]

\begin{proposition} \label{p:square-integrability}
The square-integrability conditions defined above with respect to the Poincar\'e-type metric imply the corresponding conditions in the ordinary sense, i.~e.\ we have
\begin{enumerate}
\item[(i)]  $L^2(X, E, g)   \subset L^2(X, E)$ and
\item[(ii)] $L_1^2(X, E, g) \subset L_1^2(X, E)$.
\end{enumerate}
\end{proposition}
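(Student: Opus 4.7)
The plan is to reduce both inclusions to pointwise comparisons of the two measures $dV_{g}$ and $dV$ and of the two dual metrics $g^{\bar{\jmath}i}$ and a smooth background metric on~$X$. Part~(i) is essentially immediate: since $|\sigma|^{2}\log^{2}(1/|\sigma|^{2})$ is bounded above on~$X$ (it tends to $0$ as $\sigma\to 0$ and remains smooth elsewhere), the relation $dV_{g}\sim dV/(|\sigma|^{2}\log^{2}(1/|\sigma|^{2}))$ from Proposition~\ref{p:asymptotics-preliminaries} yields $dV\leq C\,dV_{g}$ on all of $X$, so $\int_{X}\|s\|^{2}\,dV\leq C\int_{X}\|s\|^{2}\,dV_{g}$ for every $s\in L^{2}(X,E,g)$.

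For part~(ii), any smooth K\"ahler metric on~$X$ has coefficients bounded above and below in the local coordinates $(\sigma,z^{2},\dots,z^{n})$, so the ordinary $L_{1}^{2}$-condition on~$\nabla's$ is locally equivalent to $\int_{X}\sum_{i}\langle s_{i},s_{i}\rangle\,dV<\infty$, the sum running over $i\in\{\sigma,2,\dots,n\}$. The hypothesis already provides the integral of the weighted quadratic form $g^{\bar{\jmath}i}\langle s_{i},s_{j}\rangle$ against $dV_{g}$, so the key pointwise step is
\[
\sum_{i}\langle s_{i},s_{i}\rangle \;\leq\; \frac{C}{|\sigma|^{2}\log^{2}(1/|\sigma|^{2})}\,g^{\bar{\jmath}i}\langle s_{i},s_{j}\rangle,
\]
which, combined with $dV\sim|\sigma|^{2}\log^{2}(1/|\sigma|^{2})\,dV_{g}$, delivers the desired integrability after passing to the $dV_{g}$-integral. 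The pointwise inequality rests on a standard linear-algebra fact: diagonalising the positive-definite Hermitian matrix $(g^{\bar{\jmath}i})=UDU^{*}$ and writing $\tilde s_{k}=\sum_{i}U^{i}_{k}s_{i}$ gives
\[
g^{\bar{\jmath}i}\langle s_{i},s_{j}\rangle=\sum_{k}\lambda_{k}\langle\tilde s_{k},\tilde s_{k}\rangle\geq\lambda_{\min}(g^{\bar{\jmath}i})\sum_{k}\langle\tilde s_{k},\tilde s_{k}\rangle=\lambda_{\min}(g^{\bar{\jmath}i})\sum_{i}\langle s_{i},s_{i}\rangle,
\]
so the task reduces to showing $\lambda_{\min}(g^{\bar{\jmath}i})\gtrsim|\sigma|^{2}\log^{2}(1/|\sigma|^{2})$ near~$D$ (a positive lower bound far from~$D$ is automatic by smoothness).

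This eigenvalue estimate is the main obstacle, and it is precisely what Proposition~\ref{p:asymptotics-preliminaries} delivers. The diagonal of $(g^{\bar{\jmath}i})$ reads $(|\sigma|^{2}\log^{2}(1/|\sigma|^{2}),\,g^{\bar{2}2},\dots,g^{\bar{n}n})$ with the last $n-1$ entries of order~$1$; the $\sigma$-off-diagonals are $O(|\sigma|\log^{1-\alpha}(1/|\sigma|^{2}))$, dominated by the geometric mean $\sqrt{g^{\bar{\sigma}\sigma}\,g^{\bar{\imath}i}}\sim|\sigma|\log(1/|\sigma|^{2})$ up to the factor $\log^{-\alpha}(1/|\sigma|^{2})$; the remaining off-diagonals tend to zero. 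A one-step Schur-complement computation with respect to the splitting ``$\sigma$ versus the rest'' then shows that the small eigenvalue retains the order $|\sigma|^{2}\log^{2}(1/|\sigma|^{2})$ and the other $n-1$ stay bounded away from~$0$. Feeding the resulting pointwise bound into the integral gives
\[
\int_{X}\sum_{i}\langle s_{i},s_{i}\rangle\,dV\leq C\!\int_{X}\frac{g^{\bar{\jmath}i}\langle s_{i},s_{j}\rangle}{|\sigma|^{2}\log^{2}(1/|\sigma|^{2})}\,dV\leq C'\!\int_{X}|\nabla's|^{2}_{g}\,dV_{g}<\infty;
\]
the analogous statement for $\nabla''s$ follows by the same argument. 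Once the eigenvalue estimate is in place, everything else is routine bookkeeping with Proposition~\ref{p:asymptotics-preliminaries} and the finite volume of $(X',g_{X'})$.
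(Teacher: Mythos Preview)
Your proof is correct and follows essentially the same approach as the paper: both reduce (ii) to the pointwise lower bound $g^{\bar\jmath i}\langle s_i,s_j\rangle \gtrsim |\sigma|^2\log^2(1/|\sigma|^2)\sum_i\langle s_i,s_i\rangle$ using the asymptotics of Proposition~\ref{p:asymptotics-preliminaries}, and part~(i) is handled identically. The only difference is packaging---you phrase the quadratic-form bound as an eigenvalue estimate obtained via a Schur complement in the $\sigma$-direction, whereas the paper writes the form out explicitly, groups the cross terms, and applies the elementary inequality $a^2+b^2-ab\geq(a^2+b^2)/2$ by hand; unwinding your Schur step yields exactly those manipulations.
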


First we need to make a remark about the asymptotic behaviour of the metric $g_{X'}$. Using Schumacher's convergence theorem and the fact that $\omega_D$ is diagonal at $p$, we see that $g^{\bar \jmath i}$ approaches $0$ for $i, j = 2, \ldots, n$ and $i \neq j$ as $\sigma \to 0$. Together with Proposition 1 from \cite{Sch02}, which is stated there in the surface case but holds analogously in higher dimensions, we obtain the following proposition which was announced above.

\begin{proposition} \label{p:asymptotics}
With $0 < \alpha \leq 1$ from Schumacher's theorem on the asymptotics of the Poincar\'e-type metric, we have

\begin{enumerate}
\item[(i)]   $g^{\bar \sigma \sigma}                    \sim |\sigma|^2 \log^2(1/|\sigma|^2)$,
\item[(ii)]  $g^{\bar \sigma i}, g^{\bar \jmath \sigma} =    O\left(|\sigma| \log^{1-\alpha}(1/|\sigma|^2)\right)$, $i, j = 2, \ldots, n$,
\item[(iii)] $g^{\bar \imath i}                         \sim 1$,                                                    $i = 2, \ldots, n$ and
\item[(iv)]  $g^{\bar \jmath i}                         \to  0$ as $\sigma \to 0$,                                  $i, j = 2, \ldots, n$, $i \neq j$.
\end{enumerate}
\end{proposition}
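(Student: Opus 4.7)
The statement splits cleanly into the size estimates (i)--(iii), which are the higher-dimensional analogues of Proposition~1 of \cite{Sch02}, and the off-diagonal vanishing (iv), which is the new observation announced in the paragraph immediately preceding the statement. The plan is to treat these two pieces separately, as the remark preceding the proposition already suggests.

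For (i), (ii), and (iii), I would simply appeal to the higher-dimensional extension of Proposition~1 of \cite{Sch02}. The argument in \emph{loc.\ cit.} inserts Schumacher's formula for the volume form (quoted as a theorem in the preliminaries) into the K\"ahler--Einstein equation $\omega_{X'} = -\Ric(\omega_{X'}^n)$, reads off the leading behaviour of the metric coefficients $g_{i \bar \jmath}$ in the coordinates $(\sigma, z^2, \ldots, z^n)$, and then inverts the resulting matrix to obtain the asymptotic orders of $g^{\bar \jmath i}$. The computation is entirely local near $p \in D$ and uses no feature peculiar to the surface case, so it extends verbatim to arbitrary $n$.

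For (iv), the new ingredient is Schumacher's convergence theorem: $\omega_{X'}|_{D_{\sigma_0}} \to \omega_D$ locally uniformly as $\sigma_0 \to 0$. Since the tangential coordinates $z^2, \ldots, z^n$ were chosen so that $\omega_D$ is diagonal at $p$, this convergence forces $g_{i \bar \jmath}(\sigma, z^2, \ldots, z^n) \to 0$ at $p$ for $i, j \in \{2,\ldots,n\}$ with $i \neq j$, while $g_{i \bar \imath} \to 1$. To transfer the vanishing to the inverse, I would invoke the Schur complement: the $(n-1) \times (n-1)$ sub-block reads
\[
  (g^{\bar \jmath i})_{i,j=2,\ldots,n} = \bigl(M - C\, g_{\sigma \bar \sigma}^{-1}\, B\bigr)^{-1},
\]
where $M = (g_{i \bar \jmath})_{i,j \geq 2}$, $B = (g_{\sigma \bar \jmath})_{j \geq 2}$, and $C = B^{\ast}$. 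The dual size bounds corresponding to (i) and (ii) (i.e., the analogous estimates for the metric coefficients themselves, also available from \cite{Sch02}) force the correction $C\, g_{\sigma \bar \sigma}^{-1}\, B$ to vanish as $\sigma \to 0$, while $M$ tends to the positive-definite diagonal matrix representing $\omega_D$ at $p$. Consequently the inverse tends to a diagonal positive-definite matrix, which gives (iv).

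The only potential obstacle lies in the last bookkeeping step, namely controlling the Schur-complement correction $C\, g_{\sigma \bar \sigma}^{-1}\, B$. Once one has noted the dual asymptotics for $g_{i \bar \sigma}$ and $g_{\sigma \bar \sigma}$ (obtained by the same cofactor computation that produces (i) and (ii)), the remaining argument is a routine matrix calculation with no further conceptual content.
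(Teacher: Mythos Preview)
Your proposal is correct and matches the paper's approach exactly: the paper derives (i)--(iii) by citing Proposition~1 of \cite{Sch02} (noting that the surface-case argument carries over verbatim to higher dimensions), and obtains (iv) from Schumacher's convergence theorem together with the choice of coordinates making $\omega_D$ diagonal at $p$. Your Schur-complement computation merely makes explicit the inversion step that the paper leaves implicit in the single sentence ``we see that $g^{\bar \jmath i}$ approaches $0$''.
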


\begin{proof}[Proof of Proposition~\ref{p:square-integrability}]
Since the terms coming from the smooth Hermitian metric~$h$ in $E$ do not influence the following computations, we can assume that $E$ is the trivial line bundle on~$X$ and $\nabla$ is the ordinary exterior derivative $d = \partial + \dbar$.

We first observe that since $|\sigma|^2 \log^2 (1/|\sigma|^2) \to 0$ as $\sigma \to 0$, we can assume (after possibly shrinking $U$) that
\begin{equation} \label{e:poincare-denominator}
  |\sigma|^2 \log^2 (1/|\sigma|^2) \leq 1.
\end{equation}
Therefore, for every measurable function $s$, we have
\[
  \int |s|^2 \frac{dV}{|\sigma|^2 \log^2 (1/|\sigma|^2)}
  \geq \int |s|^2 dV,
\]
which implies (i). In order to show (ii), we only consider $\partial s$. Since $2 \Re(z \overline{w}) \leq |z|^2 + |w|^2$ for any complex numbers $z$ and $w$, we have
\begin{equation} \label{e:norm-ordinary}
  \begin{aligned}
    \int |\partial s|^2 dV
    &= \int \bigg( |s_\sigma|^2
                  + \sum_{j=2}^n s_\sigma \overline{s_j}
                  + \sum_{i=2}^n s_i \overline{s_\sigma}
                  + \sum_{i,j=2}^n s_i \overline{s_j}
           \bigg) dV \\
    &= \int \bigg( |s_\sigma|^2
                  + \sum_{i=2}^n |s_i|^2
                  + \sum_{j=2}^n 2 \Re (s_\sigma \overline{s_j})
                  + \sum_{\stack{i,j=2}{i<j}}^n 2 \Re (s_i \overline{s_j})
           \bigg) dV \\
    &\leq n \int \bigg( |s_\sigma|^2
                        + \sum_{i=2}^n |s_i|^2
                 \bigg) dV,
  \end{aligned}
\end{equation}
so it suffices to show that this integral is dominated by the $L^2$-norm of $\partial s$ with respect to $g_{X'}$. Now we have
\begin{equation} \label{e:two sums}
  \begin{split}
    & \quad \int \bigg( |s_\sigma|^2 g^{\bar \sigma \sigma}
                        + \sum_{j=2}^n s_\sigma \overline{s_j} g^{\bar \jmath \sigma}
                        + \sum_{i=2}^n s_i \overline{s_\sigma} g^{\bar \sigma i}
                        + \sum_{i,j=2}^n s_i \overline{s_j} g^{\bar \jmath i}
           \bigg)
           \frac{dV}{|\sigma|^2 \log^2 (1/|\sigma|^2)} \\
    & = \int \bigg( \sum_{j=2}^n \bigg( \frac{|s_\sigma|^2 g^{\bar \sigma \sigma}}{n-1}
                                        + \frac{|s_j|^2 g^{\bar \jmath j}}{n-1}
                                        + 2 \Re (s_\sigma \overline{s_j} g^{\bar \jmath \sigma})
                                 \bigg) \\
    & \quad      + \sum_{\stack{i,j=2}{i<j}}^n \bigg( \frac{|s_i|^2 g^{\bar \imath i}}{n-1}
                                                      + \frac{|s_j|^2 g^{\bar \jmath j}}{n-1}
                                                      + 2 \Re (s_i \overline{s_j} g^{\bar \jmath i})
                                               \bigg)
           \bigg)
           \frac{dV}{|\sigma|^2 \log^2 (1/|\sigma|^2)}.
  \end{split}
\end{equation}
We estimate the two sums in this expression separately. By Proposition~\ref{p:asymptotics} (i)--(iii), there are constants $c, c' > 0$ such that
\begin{align*}
  g^{\bar \sigma \sigma}   &\geq c |\sigma|^2 \log^2 (1/|\sigma|^2),       \\
  g^{\bar \jmath j}        &\geq c,                                        \\
  |g^{\bar \jmath \sigma}| &\leq c' |\sigma| \log^{1-\alpha} (1/|\sigma|^2)
\end{align*}
for $2 \leq j \leq n$. It follows that
\[
  \begin{split}
    & \quad \frac{1}{|\sigma|^2 \log^2 (1/|\sigma|^2)}
            \bigg( \frac{|s_\sigma|^2 g^{\bar \sigma \sigma}}{n-1}
                   + \frac{|s_j|^2 g^{\bar \jmath j}}{n-1}
                   + 2 \Re (s_\sigma \overline{s_j} g^{\bar \jmath \sigma})
            \bigg) \\
    & \geq  \frac{1}{(n-1) |\sigma|^2 \log^2 (1/|\sigma|^2)}
            \bigg( \! c |s_\sigma|^2 |\sigma|^2 \log^2 (1/|\sigma|^2)
                   + c |s_j|^2
                   - {2 c' (n-1) |s_\sigma| |s_j| |\sigma| \log^{1-\alpha} (1/|\sigma|^2)}
            \! \bigg) \\
    & =     \frac{c}{n-1} \bigg( |s_\sigma|^2
                                 + \left(\frac{|s_j|}{|\sigma| \log (1/|\sigma|^2)}\right)^2
                                 - \frac{2 c' (n-1) |s_\sigma| |s_j|}{c |\sigma| \log^{1+\alpha} (1/|\sigma|^2)}
                          \bigg).
  \end{split}
\]
Since $\alpha > 0$, $\log^\alpha(1/|\sigma|^2)$ tends to infinity as $\sigma$ approaches $0$. Thus we can assume (after possibly shrinking $U$) that $\log^\alpha (1/|\sigma|^2) \geq 2c'(n-1)/c$. Together with the estimate
\[
  a^2 + b^2 - ab = \frac{a^2 + b^2}{2} + \frac{(a - b)^2}{2} \geq \frac{a^2 + b^2}{2}
  \quad \text{for real numbers } a \text{ and } b
\]
and \eqref{e:poincare-denominator}, we obtain
\begin{equation} \label{e:first sum}
  \begin{split}
    & \quad \frac{1}{|\sigma|^2 \log^2 (1/|\sigma|^2)}
            \bigg( \frac{|s_\sigma|^2 g^{\bar \sigma \sigma}}{n-1}
                   + \frac{|s_j|^2 g^{\bar \jmath j}}{n-1}
                   + 2 \Re (s_\sigma \overline{s_j} g^{\bar \jmath \sigma})
            \bigg) \\
    & \geq  \frac{c}{2(n-1)} \bigg( |s_\sigma|^2
                                    + \frac{|s_j|^2}{|\sigma|^2 \log^2 (1/|\sigma|^2)}
                             \bigg) \\
    & \geq  \frac{c}{2(n-1)} \left(|s_\sigma|^2 + |s_j|^2\right).
  \end{split}
\end{equation}
The second sum in \eqref{e:two sums} can be estimated similarly to the first. Here we note that by Proposition \ref{p:asymptotics} (iv) we can assume (again after possibly shrinking $U$) that
\[
  |g^{\bar \jmath i}| \leq \frac{c}{2(n-1)}
\]
for $2 \leq i < j \leq n$. As above, it follows that
\begin{equation} \label{e:second sum}
  \begin{split}
    & \quad  \frac{1}{|\sigma|^2 \log^2 (1/|\sigma|^2)}
             \bigg( \frac{|s_i|^2 g^{\bar \imath i}}{n-1}
                    + \frac{|s_j|^2 g^{\bar \jmath j}}{n-1}
                    + 2 \Re (s_i \overline{s_j} g^{\bar \jmath i})
             \bigg) \\
    & \geq   \frac{c}{(n-1) |\sigma|^2 \log^2 (1/|\sigma|^2)}
             \bigg( |s_i|^2
                    + |s_j|^2
                    - \frac{2 (n-1) |s_i| |s_j| |g^{\bar \jmath i}|}{c}
             \bigg) \\
    & \geq   \frac{c \left(|s_i|^2 + |s_j|^2\right)}{2 (n-1) |\sigma|^2 \log^2 (1/|\sigma|^2)} \\
    & \geq   \frac{c}{2(n-1)} \left(|s_i|^2 + |s_j|^2\right).
  \end{split}
\end{equation}
Substituting \eqref{e:first sum} and \eqref{e:second sum} into \eqref{e:two sums}, we finally obtain
\[
  \begin{split}
    & \quad \int \bigg( |s_\sigma|^2 g^{\bar \sigma \sigma}
                        + \sum_{j=2}^n s_\sigma \overline{s_j} g^{\bar \jmath \sigma}
                        + \sum_{i=2}^n s_i \overline{s_\sigma} g^{\bar \sigma i}
                        + \sum_{i,j=2}^n s_i \overline{s_j} g^{\bar \jmath i}
                 \bigg)
                 \frac{dV}{|\sigma|^2 \log^2 (1/|\sigma|^2)} \\
    & \geq  \frac{c}{2(n-1)} \int \bigg( \sum_{j=2}^n \left(|s_\sigma|^2 + |s_j|^2\right)
                                         + \sum_{\stack{i,j=2}{i<j}}^n \left(|s_i|^2 + |s_j|^2\right)
                                  \bigg) dV \\
    & =     \frac{c}{2} \int \bigg( |s_\sigma|^2
                                    + \sum_{i=2}^n |s_i|^2
                             \bigg) dV,
  \end{split}
\]
which equals \eqref{e:norm-ordinary} up to a constant. This proves the claim.
\end{proof}

As explained before, Proposition~\ref{p:square-integrability} completes the proof of Theorem \ref{t:framed kobayashi-hitchin}.

%
%

%
%
\vspace{12pt}
\noindent {\scshape Fachbereich Mathematik und Informatik, Philipps-Universit\"at Marburg, Hans-Meerwein-Strasse, Lahnberge, 35032 Marburg, Germany} \\[6pt]
{\itshape E-mail address:\/} {\ttfamily stemmler@mathematik.uni-marburg.de}

\end{document}